\documentclass[preprint,12pt,3p]{elsarticle}
\usepackage{verbatim}
\usepackage{amsmath,amsthm,amsfonts,amssymb}
\usepackage{algorithm}
\usepackage{algpseudocode}
\usepackage{cases}
\usepackage[hidelinks]{hyperref}
\usepackage{graphics}
\usepackage{booktabs}
\usepackage{geometry}
\usepackage[utf8]{inputenc}
\usepackage{newunicodechar}
\usepackage{graphicx}
\usepackage{caption}
\usepackage{tikz}
\usepackage{subcaption}
\graphicspath{{./figure/}}
\usepackage{multicol}
\usepackage[toc,title,page]{appendix}
\usepackage{fullpage}
\usepackage[normalem]{ulem}
\usepackage{xcolor,sectsty}
\usetikzlibrary{calc}
\usetikzlibrary{arrows.meta,decorations.pathreplacing}

\newtheorem{theorem}{Theorem}[section]

\newtheorem{remark}{Remark}

\newtheorem{definition}[theorem]{Definition}
\newtheorem{example}[theorem]{Example}

\definecolor{darkslategray}{rgb}{0.18, 0.31, 0.31}
\definecolor{warmblack}{rgb}{0.0, 0.26, 0.26}

\newcommand{\mb}{\mathbb}
\newcommand{\mc}{\mathcal}

\begin{document}

\begin{frontmatter}
\title{\textcolor{warmblack}{Eigenbounds of symmetric positive definite tensors}}
\author{  Snigdhashree Nayak, Hemant Sharma, Nachiketa Mishra}

\address{Department of Mathematics,\\
        Indian Institute of Information Technology, Design and Manufacturing Kancheepuram, Chennai, India\\
        {\bf Email:} 
        snigdhashreenayak91@gmail.com; 
        sharmahemant39@gmail.com;
         nmishra@iiitdm.ac.in
        }
                        
\vspace{-2cm}

\begin{abstract}

This article introduces an algebraic framework for establishing eigenvalue bounds for symmetric positive definite tensors by leveraging intrinsic invariants—specifically, the trace and determinant (resultant). We derive a hierarchy of inequalities via the Arithmetic Mean-Geometric Mean (AM-GM) inequality that yields progressively tighter upper and lower bounds for the tensor's spectral radius and smallest eigenvalue. A comprehensive comparative analysis demonstrates that our invariant-based approach significantly outperforms classical coordinate-dependent methods, such as the Gershgorin circle theorem. We explicitly show that our bounds remain robust and informative in scenarios where Gershgorin bounds fail, particularly for tensors with negative off-diagonal entries (where algebraic cancellations occur) and higher-order tensors (where combinatorial explosion leads to loose estimates). Furthermore, we validate the practical utility of these bounds by applying them to certify the positive definiteness of Lyapunov functions for the stability analysis of nonlinear autonomous systems.

\end{abstract}
\begin{keyword}
Eigenvalue bounds, Symmetric tensors, Trace and Determinant, Gershgorin bounds, Lyapunov stability.
\end{keyword}

\end{frontmatter}

\section{Introduction}\label{intro}

Eigenvalue bounds are essential for gaining insights into the properties and behavior of tensors in a wide range of applications. They provide a valuable tool for assessing stability, convergence, and performance in various mathematical, scientific, and engineering contexts. There is a wide range of applications of eigenvalue problems in tensors, including statistical data analysis \cite{Zhang2001}, signal processing and navigation \cite{Kofidis2001}, higher-order Markov chains \cite{li2014}, and automatic control \cite{Ni2008}. In 2005, Lim \cite{lim2005} and Qi \cite{qi2005}  introduced the tensor eigenvalue problem independently.\\

\noindent Computing the complete set of eigenvalues for higher-order tensors can be a formidable task. In certain
scenarios, it suffices to identify the tensor’s largest or smallest eigenvalue. For example, to study the stability of a nonlinear autonomous system \cite{Ni2008} the smallest eigenvalue of the tensor is needed. The $m$th degree $n$ variable homogeneous polynomial $$f(x)=\sum_{i_{1},\ldots, i_{m}=1}^n a_{i_{1},\ldots, i_{m}}x_{i_{1}}\cdots x_{i_{m}},x\in\mb{R}^n,$$ is called positive definite if $f(x)>0,~~\forall x\in\mb{R}^n,x\neq 0.$ The positive definiteness of a homogeneous polynomial form with an even degree plays a crucial role in the examination of stability in nonlinear autonomous systems using Lyapunov's direct method in the field of automatic control. By employing Lyapunov's method, the analysis of stability can be simplified by establishing the existence of a positive definite function, whose time derivative along the system's trajectories is consistently negative. Specifically, in the context of the system, $\dot{x}=g(x)$ is called asympototically stable, if it is possible to find a multivariate polynomial $f(x)$ that satisfies the conditions of being positive definite and 
$$\left(\dfrac{\partial f}{\partial x}\right)^tg(x)<0,~~\forall x\in\mb{R}^n,x\neq 0.$$ Further, when $m\geq 4$ and $n\geq 3,$ it is very challenging to examine the stability of the system. Qi \cite{qi2005} introduced the concept of eigenvalues for real symmetric tensors, motivated by
the study of positive definiteness in homogeneous polynomials. Thus, if we know bound of the minimum eigenvalue is  positive then the system is positive definite.\\

\noindent Further, in higher-order diffusion tensor imaging \cite{qi2010}, the non-negative smallest eigenvalue of a tensor ensures the positive definiteness of the diffusivity
function.
Calculating eigenvalues of tensors with higher orders is an arduous task.  In matrix theory, we have many eigenvalue bounds calculated using different methods, like norms, entries of matrices, etc. In this endeavor, we articulate bounds on the eigenvalues of a tensor by leveraging its intrinsic characteristics, drawing upon both the trace and determinant as instrumental elements in our analysis.\\

\noindent The subsequent sections of this article are structured as follows:
in Section \ref{prlem},  we presented a foundational overview by introducing key notations, definitions, and results that serve as a foundation for comprehending the article. In Section \ref{boen}, introduction to inequalities, employing the AM-GM inequalities, to derive bounds for eigenvalues. In Section \ref{exmp}, examples are examined in accordance with the presented inequalities. In Section \ref{boundscomp}, we delve into a comparative analysis, contrasting the bounds derived from various inequalities with those obtained from Gershgorin eigenvalue bounds. Section \ref{con}, gathers the concluding remarks.

\section{Preliminaries}\label{prlem}
In this section, we gather fundamental findings regarding tensor eigenvalues and introduce the notation conventions employed throughout this paper. Throughout this paper, we will use the following notations. We denote the set of all real $m$th order $n$ dimensional tensors by $\mathbb{R}^{[n,m]}.$ We will adopt a notation convention in which lowercase letters like `u' represent vectors, while uppercase, calligraphic letters like `$\mc{A}$' are used to denote higher-order tensors. Individual elements of a tensor $\mc{A}$  are represented as $a_{i_{1},i_{2},\ldots,i_{m}}. $, where $(i_{1},i_{2},\ldots,i_{m})$ denote the indices. Let $m, n$ be two positive integers. An $m$-order  $n$-dimensional  tensor $\mc{A}$ is a set of $n^m$ entries 
$$\mc{A}=\left(a_{i_{1} i_{2} \cdots i_{m}}\right)_{1 \leq i_{j} \leq n}~~~ 
 ,~~(j=1, \ldots, m),$$ over the real field $\mb{R}$ is a multidimensional array with all entries $a_{i_{1} i_{2} \ldots i_{m}} \in \mb{R}$. It is denoted as $\mc{A}^{[n,m]}.$\\

\noindent For a vector $x=\left(x_{1}, \ldots, x_{n}\right)^{T} \in \mb{R}^{n}$, let $\mc{A} x^{m-1}$ be a vector in $\mb{R}^{n}$ whose $i$-th component is defined as the following
$$
\left(\mc{A} x^{m-1}\right)_{i}=\sum_{i_{2}, \ldots, i_{m}=1}^{n} a_{i i_{2} \cdots i_{m}} x_{i_{2}} \cdots x_{i_{m}}.
$$
%  Assume that $\mc{A}$ is an m-order n-dimensional  tensor. For any $n$-dimensional vector $x$, define
% $$
% \left(\mc{A} x^{m-1}\right)_{i_{1}} \equiv \sum_{i_{2}=1}^{n} \cdots \sum_{i_{m}=1}^{n} a_{i_{1} i_{2} \cdots i_{m}} x_{i_{2}} \cdots x_{i_{m}} \quad \text { for } \quad i_{1}=1, \ldots, n
% $$
The tensor $\mc{I}\in \mb{R}^{[n,m]}$ is called identity tensor if  $i_{i_{1} \ldots i_{m}}=1$ if and only if $i_{1}=\cdots=i_{m}$, and zero otherwise.\\

\noindent A real $m$th order $n$-dimensional tensor $\mc{A}$ is called symmetric if its entries are invariant under any permutation of their indices. Finally, for the sake of simplicity, only real symmetric tensors will be considered.\\

\noindent There exist multiple definitions for the eigenvalues and corresponding eigenvectors of tensors. For instance, Qi \cite{qi2005} introduced the concepts of $H$-eigenvalues, $Z$-eigenvalues, and $E$-eigenvalues. In the scope of this study, our primary emphasis is on establishing bounds concerning the $H$-eigenvalues of symmetric tensors..

\begin{definition}\cite{lim2005,qi2005}
   A real number $\lambda$ is called an $H$-eigenvalue of $\mc{A}$ if $\lambda$ and a nonzero real vector $x$ are solutions of the following homogeneous polynomial equation:
\begin{equation*}\label{hev}
    \mc{A} x^{m-1}=\lambda x^{[m-1]},
\end{equation*}
and called the solution $x$ an $H$-eigenvector of $\mc{A}$ associated with the $\mathrm{H}$-eigenvalue $\lambda$.
\end{definition}
\noindent Let $\alpha=\left(\alpha_{1}, \ldots, \alpha_{n}\right)$ be a sequence of nonnegative integers and
$$
x^{\alpha}=x_{1}^{\alpha_{1}} \cdots x_{n}^{\alpha_{n}}.
$$
Then, $x^{\alpha}$ is a monomial of degree $|\alpha|=\alpha_{1}+\cdots+\alpha_{n}$.
\begin{definition}\cite{shao2013}
Let $f_{1}\left(x_{1}, \ldots, x_{n}\right), \ldots, f_{n}\left(x_{1}, \ldots, x_{n}\right) \in \mathbb{R}\left[x_{1}, \ldots, x_{n}\right]$ be an (ordered) set of homogeneous polynomials (with complex coefficients) in $n$ variables $x_{1}, \ldots, x_{n}$ of degrees $d_{1}, \ldots, d_{n}$, respectively. Write $f_{i}\left(x_{1}, \ldots, x_{n}\right)=\sum_{|\alpha|=d_{i}} u(i, \alpha) x^{\alpha}$, where $u(i, \alpha)$ is the coefficient of the monomial $x^{\alpha}$ in $f_{i}\left(x_{1}, \ldots, x_{n}\right)$. Then their resultant (denoted by $R_{d_{1}, \ldots, d_{n}}$ ) is the unique polynomial on the coefficients $\left\{u(i, \alpha)|i=1, \ldots, n,| \alpha \mid=d_{i}\right\}$, of these polynomials satisfying the following three conditions:
\begin{enumerate}[(1)]
    \item The system of homogeneous equations $f_{1}\left(x_{1}, \ldots, x_{n}\right)=\cdots=f_{n}\left(x_{1}, \ldots, x_{n}\right)=0$ has a nonzero solution if and only if the corresponding value of $R_{d_{1}, \ldots, d_{n}}$ is $0$.
    \item When $f_{i}\left(x_{1}, \ldots, x_{n}\right)=x_{i}^{d_{i}}$ for $i=1,\ldots,n,$ then the corresponding value of $R_{d_{1}, \ldots, d_{n}}$ is $1$.
   \item When the coefficients $\{u(i,\alpha)|i=1,\ldots , n,|\alpha|=d_{i}\}$ of these polynomials   $f_{1}\left(x_{1}, \ldots, x_{n}\right)$ $ ,\ldots, f_{n}\left(x_{1}, \ldots, x_{n}\right)$ are all viewed as independent variables, then $R_{d_{1}, \ldots, d_{n}}$ is an irreducible polynomial on these variables.
\end{enumerate}
\end{definition}
\noindent The following example illustrates the resultant of a $4$-order $2$-dimensional tensor.

\begin{example}\label{exmp2}
Let $\mathcal{A}\in\mathbb{R}^{2\times2\times2\times2}$ be a 4th-order tensor. The entries are denoted by $a_{ijkl}$ where indices range from 1 to 2.

The associated homogeneous polynomials (each of degree $3$ in two variables $(x_1,x_2))$ are
\[
\begin{aligned}
f_1(x_1,x_2)
&= (\mathcal{A}x^{4-1})_1=(\mathcal{A}x^3)_1
=\sum_{i_2, i_3, i_4=1}^2a_{1i_2i_3i_4}x_{i_2}x_{i_3}x_{i_4}\\
&= a_{1111}x_1^3 + (a_{1112}+a_{1121}+a_{1211})x_1^2x_2 \\
&\quad + (a_{1122}+a_{1212}+a_{1221})x_1x_2^2 + a_{1222}x_2^3,\\
f_2(x_1,x_2)
&= (\mathcal{A}x^3)_2
=\sum_{i_2, i_3, i_4=1}^2a_{2i_2i_3i_4}x_{i_2}x_{i_3}x_{i_4}\\
&= a_{2111}x_1^3 + (a_{2112}+a_{2121}+a_{2211})x_1^2x_2 \\
&\quad + (a_{2122}+a_{2212}+a_{2221})x_1x_2^2 + a_{2222}x_2^3.
\end{aligned}
\]

Here each has degree $3,$ i.e., $d_1=d_2=3$. Next we shall identify the coefficient set $(u(i,\alpha))$.

For ($f_1$):
\begin{center}
\begin{tabular}{|c|c|}
\hline
\text{Monomial }$x^\alpha$ & \text{Coefficient }$u(1,\alpha)$ \\
\hline
$x_1^3$ & $a_{1111}$\\
\hline
$x_1^2x_2$ & $a_{1112}+a_{1121}+a_{1211}$\\
\hline
$x_1x_2^2$ & $a_{1122}+a_{1212}+a_{1221}$\\
\hline
$x_2^3$ & $a_{1222}$\\
\hline
\end{tabular}
\end{center}

For ($f_2$):
\begin{center}
\begin{tabular}{|c|c|}
\hline
\text{Monomial }$x^\alpha$ & \text{Coefficient }$u(2,\alpha)$ \\
\hline
$x_1^3$ & $a_{2111}$\\
\hline
$x_1^2x_2$ & $a_{2112}+a_{2121}+a_{2211}$\\
\hline
$x_1x_2^2$ & $a_{2122}+a_{2212}+a_{2221}$\\
\hline
$x_2^3$ & $a_{2222}$\\
\hline
\end{tabular}
\end{center}

These are exactly the coefficient sets $\{u(i,\alpha) \mid i=1,2, |\alpha|=3\}$ appearing in the definition.
Next we shall construct the resultant matrix.

According to the definition, the resultant $R_{d_1,d_2}$ is a polynomial in these coefficients that vanishes if and only if the system
\[
f_1(x_1,x_2)=f_2(x_1,x_2)=0
\]
has a nonzero solution ($(x_1,x_2)\neq(0,0)$).
For cubic forms in two variables, this resultant is given by the Sylvester matrix of size $(d_1+d_2)\times(d_1+d_2) = 6\times6$. Let us denote the coefficients of $f_1$ as $c_0, c_1, c_2, c_3$ and $f_2$ as $e_0, e_1, e_2, e_3$ corresponding to the table above.

\[
R_{d_1,d_2}=\det
\begin{pmatrix}
c_0 & c_1 & c_2 & c_3 & 0 & 0 \\
0 & c_0 & c_1 & c_2 & c_3 & 0 \\
0 & 0 & c_0 & c_1 & c_2 & c_3 \\
e_0 & e_1 & e_2 & e_3 & 0 & 0 \\
0 & e_0 & e_1 & e_2 & e_3 & 0 \\
0 & 0 & e_0 & e_1 & e_2 & e_3
\end{pmatrix}.
\]
Where:
\[
\begin{aligned}
c_0 &= a_{1111}, & c_1 &= a_{1112}+a_{1121}+a_{1211}, \\
c_2 &= a_{1122}+a_{1212}+a_{1221}, & c_3 &= a_{1222}.
\end{aligned}
\]
And similarly for $e_i$ using the entries of $f_2$.

Now, we shall do a Verification of the three defining properties:
\begin{itemize}
    \item Zero-criterion:
   $(R_{d_1,d_2}=0) \iff$ the system
   $(f_1(x_1,x_2)=f_2(x_1,x_2)=0)$
   has a nontrivial (nonzero) solution.
   This ensures the tensor ($\mathcal{A}$) has a nonzero eigenvector ($x$) satisfying $(\mathcal{A}x^3=\lambda x^{[3]})$ for some $(\lambda)$.
   \item Normalization:
   If $(d_1=x_1^3)$ and $(d_2=x_2^3)$ (i.e., $(a_{1111}=a_{2222}=1)$ and all other $(a_{ijkl}=0))$, then
   \[
   R_{d_1,d_2}=1,
   \]
   as required by the definition.
   \item Irreducibility:
   When the coefficients
   $(a_{ijkl})$
   are regarded as independent variables, the determinant above is an irreducible polynomial in these variables, satisfying the third condition.
\end{itemize}
\end{example}

\begin{definition}\cite{shao2013}
Let $\mc{A}$ be an $m$ order $n$ dimensional tensor with $m\geq 2$ and $x=(x_{1}, \ldots, x_{n})^{T}.$ Then the determinant of $\mc{A}$ the resultant of the ordered system of homogeneous equations $\mc{A}x^{m-1}=0$ and it is denoted as $det(\mc{A}).$
\end{definition}

\begin{theorem}\label{thrm2.6}\cite{qi2018}
We have the following conclusions on eigenvalues of an $m$th order $n$-dimensional symmetric tensor $\mc{A}$.
\begin{enumerate}[(a)]
    
\item  A number $\lambda \in \mathbf{C}$ is an eigenvalue of $\mc{A}$ if and only if it is a root of the characteristic polynomial $\phi(\lambda)=\det(\mc{A}-\lambda \mc{I})$.
\item  The number of eigenvalues of $\mc{A}$ is $d=n(m-1)^{(n-1)}$. 
\item The product of all the eigenvalues of $\mc{A}$ is equal to $\det(\mc{A}).$ 
\item The sum of all the eigenvalues of $\mc{A}$ is
$$
(m-1)^{(n-1)} tr(\mc{A}),
$$
where $tr(\mc{A})$ denotes the sum of all diagonal elements of $\mc{A}$.
\item  If m is even, then $\mc{A}$ always has $H$-eigenvalues. $\mc{A}$ is positive definite (positive semidefinite) if and only if all of its H-eigenvalues are positive (nonnegative).
\item The eigenvalues of $\mc{A}$ lie in the following $n$ disks:
$$
\left|\lambda-a_{i i \ldots i}\right| \leq \sum\left\{\left|a_{i i_{2} \ldots i_{m}}\right|: i_{2}, \ldots, i_{m}=1, \ldots, n,\left\{i_{2}, \ldots, i_{m}\right\} \neq\{i, \ldots, i\}\right\},
$$
for $i=1, \ldots, n$.
\end{enumerate}
\end{theorem}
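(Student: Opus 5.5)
The statement collects six standard facts about eigenvalues of a symmetric tensor, taken from \cite{qi2018}. I would prove them in the order (a), (b), (c), (d), (f), (e), since each uses the resultant machinery set up by the preceding definitions.

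For (a), note that $\lambda$ is an eigenvalue exactly when the homogeneous system $(\mc{A}-\lambda\mc{I})x^{m-1}=\mc{A}x^{m-1}-\lambda x^{[m-1]}=0$ has a nonzero complex solution. By property (1) of the resultant, this holds if and only if $\det(\mc{A}-\lambda\mc{I})=0$.

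For (b), I would show that $\phi(\lambda)$ has degree exactly $d=n(m-1)^{n-1}$ in $\lambda$.
\begin{itemize}
\item The resultant of $n$ forms of degree $m-1$ is homogeneous of degree $(m-1)^{n-1}$ in the coefficients of each form. Hence it has total degree $n(m-1)^{n-1}$, and $\deg\phi\le d$.
\item The coefficient of $\lambda^d$ is $\det(-\mc{I})$. By normalization (property (2)) and multi-homogeneity this equals $(-1)^d\neq 0$.
\item Counting roots with multiplicity then gives exactly $d$ eigenvalues.
\end{itemize}

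For (c), write $\phi(\lambda)=(-1)^d\prod_{j}(\lambda-\lambda_j)$. Setting $\lambda=0$ gives $\det\mc{A}=\prod_j\lambda_j$.

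For (d), I would compute the coefficient of $\lambda^{d-1}$ in $\phi$, which equals $(-1)^{d-1}\sum_j\lambda_j$. This is the main obstacle. My plan is to use the Poisson product formula for resultants, or equivalently to expand $\det(\mc{A}-\lambda\mc{I})$ to first order around $-\lambda\mc{I}$:
\begin{itemize}
\item The derivative of the resultant at the diagonal system $x_i^{m-1}$, in the direction of the coefficient $a_{i\,i\cdots i}$, is $(m-1)^{n-1}$. This is because $\det(\mathrm{diag})=\prod_i a_{i\cdots i}^{(m-1)^{n-1}}$ for diagonal tensors.
\item Derivatives in off-diagonal coefficient directions vanish. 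To see this, apply a torus action $x\mapsto tx$ with distinct weights: the resultant is invariant, but off-diagonal monomials acquire nontrivial weights, so the linear term cannot contain them.
\item Together these give the coefficient $(-1)^{d-1}(m-1)^{n-1}\mathrm{tr}(\mc{A})$, and hence the stated trace formula.
\end{itemize}

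For (f), take an eigenpair $(\lambda,x)$ and choose $i$ with $|x_i|=\max_k|x_k|>0$. The $i$-th equation gives
\[
(\lambda-a_{i\cdots i})x_i^{m-1}=\sum_{(i_2,\ldots,i_m)\neq(i,\ldots,i)}a_{ii_2\cdots i_m}x_{i_2}\cdots x_{i_m}.
\]
Taking absolute values and dividing by $|x_i|^{m-1}$ yields the disk bound.

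For (e), I would use the variational characterization. When $m$ is even, minimize $f(x)=\mc{A}x^m$ over the compact set $\sum_k x_k^m=1$. A minimizer exists, and the Lagrange conditions $\mc{A}x^{m-1}=\lambda x^{[m-1]}$ (using symmetry, $\nabla f=m\mc{A}x^{m-1}$) show that $\lambda=f(x)$ is a real H-eigenvalue. Conversely, every H-eigenpair satisfies $\lambda=f(x)/\sum_k x_k^m$. So the minimum of $f$ on this set equals the smallest H-eigenvalue, and positive (semi)definiteness of $f$ is equivalent to all H-eigenvalues being positive (nonnegative).
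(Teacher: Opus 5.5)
The paper gives no proof of this theorem; it is quoted from Qi's work. Your argument is correct and matches the standard one in the tensor-eigenvalue literature for (a), (b), (c), (e) and (f):
\begin{itemize}
\item the vanishing criterion of the resultant;
\item homogeneity of degree $(m-1)^{n-1}$ in each form plus normalization, which give the leading coefficient $(-1)^d$ and $\phi(0)=\det(\mathcal{A})=\prod_j\lambda_j$;
\item minimization of $\mathcal{A}x^m$ over $\sum_k x_k^m=1$;
\item a maximal-modulus component of an eigenvector.
\end{itemize}

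For the trace formula (d), identifying the coefficient of $\lambda^{d-1}$ with the directional derivative of the resultant at $-\mathcal{I}$ in the direction of $\mathcal{A}$ is the right idea. However, the mechanism you give for killing the off-diagonal partials is misstated, in two ways.
\begin{itemize}
\item The resultant is not invariant under $x\mapsto tx$: it is multiplied by $(t_1\cdots t_n)^{(m-1)^n}$, and this substitution moves $-\mathcal{I}$.
\item A one-parameter subgroup with merely distinct weights does not separate the coefficients. With weights $(0,1,2)$ and $m=3$, the coefficient of $x_1x_3$ in $f_2$ has the same weight as that of $x_2^2$.
\end{itemize}

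Use instead the full torus acting by $f_i(x)\mapsto t_i^{-(m-1)}f_i(tx)$, $t\in(\mathbb{C}^*)^n$. This action has three useful properties.
\begin{itemize}
\item It preserves the existence of nonzero common zeros, so by irreducibility it rescales the resultant by a nonzero constant. That constant is $1$, because the system $f_i=x_i^{m-1}$ is fixed.
\item It fixes every diagonal system, in particular $-\mathcal{I}$.
\item It multiplies the coefficient $u(i,\alpha)$ by $t^{\alpha-(m-1)e_i}$, which is a nontrivial character exactly when $\alpha\neq(m-1)e_i$.
\end{itemize}
Differentiating the invariance at $-\mathcal{I}$ gives $(t^{\alpha-(m-1)e_i}-1)\,\partial\mathrm{Res}/\partial u(i,\alpha)=0$ there, so every off-diagonal partial vanishes.

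Likewise, the diagonal partials must be taken at $-\mathcal{I}$. There $\prod_i c_i^{(m-1)^{n-1}}$ gives $(-1)^{d-1}(m-1)^{n-1}$, not $(m-1)^{n-1}$, and this is the sign your final coefficient $(-1)^{d-1}(m-1)^{n-1}\operatorname{tr}(\mathcal{A})$ needs.

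With these repairs your proof of (d) is complete and self-contained. It also makes clear that symmetry of $\mathcal{A}$ is used only in (e).
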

\noindent Theorem \ref{thrm2.6}(f) refers to Gershgorin circle theorem for tensors. This theorem will be used extensively for bound comparison in this work.

\begin{remark}
Throughout this paper, the notion of positive definiteness is used only when the
tensor order $m$ is even. This is consistent with the classical theory of
homogeneous polynomials, as a real polynomial of odd degree cannot be positive
definite in the sense of $\mathcal{A}x^{m} > 0$ for all $x \ne 0$ 
(see \cite{qi2005}). Hence, whenever we refer to a positive definite tensor or derive
results that assume positivity of all eigenvalues, it is implicitly understood that
$m$ is even.
\end{remark}

\section{Bounds for the eigenvalues}\label{boen}

Bounding the eigenvalues of tensors presents a greater challenge compared to matrices, owing to their higher dimensionality and non-linear nature. The derivation of eigenvalue bounds for tensors can be intricate, primarily due to the complex behavior exhibited by these mathematical entities.\\

\noindent  The AM(Arithmetic Mean)-GM(Geometric Mean) inequality, a simple yet potent mathematical tool, has found extensive use in proving inequalities across various branches of mathematics and beyond. It provides a versatile means to compare means and set boundaries for expressions involving non-negative real numbers. Its broad utility renders it a valuable asset in mathematical analysis, optimization, and problem-solving endeavors.\\

\noindent Hence, commencing with the most elementary bounds at our disposal, we turn to the AM-GM inequalities to establish our initial bounds.
\begin{theorem}\label{thrm1}
Let $\mc{A}\in\mb{R}^{[n,m]}$ be a symmetric tensor of order $m$ and dimension $n$ with $\lambda_1 \geq \lambda_2 \geq \cdots \geq \lambda_{d} > 0,$ where $d=n(m-1)^{(n-1)}$ and
let $1\leq k \leq d-1,$   then
\begin{equation}\label{e1}
    \lambda_{1}+\cdots+\lambda_{k} \leq (m-1)^{(n-1)}  tr(\mc{A})-(d-k)\left[\left(\frac{k}{(m-1)^{(n-1)}tr (\mc{A})}\right)^{k} \det (\mc{A})\right]^{1 /(d-k)},
\end{equation}
    \begin{equation}\label{e2}
     (\lambda_{d-k+1}\cdots \lambda_{d})\geq \bigg(\dfrac{d-k}{(m-1)^{(n-1)}tr(\mc{A})}\bigg)^{d-k}\det(\mc{A}).
\end{equation}

\end{theorem}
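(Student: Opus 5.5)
The plan is to reduce both inequalities to two applications of the AM-GM inequality, using the spectral invariants supplied by Theorem \ref{thrm2.6}. Write
\[
T:=(m-1)^{(n-1)}\,tr(\mc{A}), \qquad D:=\det(\mc{A}).
\]
By Theorem \ref{thrm2.6}(b)--(d), the $d=n(m-1)^{(n-1)}$ eigenvalues, counted with multiplicity as roots of $\phi(\lambda)=\det(\mc{A}-\lambda\mc{I})$, satisfy $\lambda_1+\cdots+\lambda_d=T$ and $\lambda_1\cdots\lambda_d=D$.

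The statement's hypothesis is that all these eigenvalues are real and ordered with $\lambda_d>0$. I will use this hypothesis as given. Consequently $T>0$ and $D>0$, so every quotient and fractional power below is well defined. Making this bookkeeping precise is the step I expect to need the most care. The eigenvalues in Theorem \ref{thrm2.6} are a priori complex, so the ordering $\lambda_1\ge\cdots\ge\lambda_d>0$ must be read as an assumption that the full multiset of $d$ roots is real and positive. It is not merely a statement about $H$-eigenvalues, and the positivity is exactly what is needed for AM-GM. Everything else is elementary.

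For \eqref{e1}, fix $1\le k\le d-1$ and set $S=\lambda_1+\cdots+\lambda_k$.
\begin{enumerate}
\item Since the remaining eigenvalues are positive, $S\le T$.
\item By AM-GM on the $k$ largest eigenvalues,
\[
\lambda_1\cdots\lambda_k\le (S/k)^k\le (T/k)^k .
\]
\item Dividing $D$ by this product gives
\[
\lambda_{k+1}\cdots\lambda_d=\frac{D}{\lambda_1\cdots\lambda_k}\ge \Big(\frac{k}{T}\Big)^k D .
\]
\item Applying AM-GM to the $d-k$ smallest eigenvalues gives
\[
T-S=\lambda_{k+1}+\cdots+\lambda_d\ge (d-k)\big(\lambda_{k+1}\cdots\lambda_d\big)^{1/(d-k)}\ge (d-k)\Big[\Big(\frac{k}{T}\Big)^k D\Big]^{1/(d-k)} .
\]
\end{enumerate}
Rearranging the last inequality gives exactly \eqref{e1}.

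For \eqref{e2}, I repeat the argument with the roles of the blocks exchanged. Apply AM-GM to the $d-k$ largest eigenvalues, whose sum is at most $T$:
\[
\lambda_1\cdots\lambda_{d-k}\le \Big(\frac{T}{d-k}\Big)^{d-k}.
\]
Therefore
\[
\lambda_{d-k+1}\cdots\lambda_d=\frac{D}{\lambda_1\cdots\lambda_{d-k}}\ge\Big(\frac{d-k}{T}\Big)^{d-k}D,
\]
which is \eqref{e2}. Note that the ordering of the eigenvalues is not actually used in either argument beyond labelling the two blocks, so both inequalities hold for any split of the spectrum into a block of size $k$ and its complement.
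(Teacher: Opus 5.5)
Your argument is correct. For \eqref{e1} it matches the paper's proof step for step: AM-GM on $\lambda_{k+1},\dots,\lambda_d$, AM-GM on $\lambda_1,\dots,\lambda_k$ together with $S\le T$, then rearranging.

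For \eqref{e2} your route is different from, and cleaner than, what the paper writes. The paper again applies AM-GM to the bottom block $\lambda_{k+1},\dots,\lambda_d$, which bounds $D/(\lambda_1\cdots\lambda_k)$ from above. It then asserts the chain $D/(\lambda_1\cdots\lambda_k)\le D/(\lambda_{d-k+1}\cdots\lambda_d)\le \bigl(T/(d-k)\bigr)^{d-k}$. The first link, which is where the ordering is used, runs in the wrong direction to carry over the bound just derived. So the second link does not follow from the preceding lines. It holds only because $D/(\lambda_{d-k+1}\cdots\lambda_d)=\lambda_1\cdots\lambda_{d-k}$, and AM-GM on the $d-k$ largest eigenvalues gives $\lambda_1\cdots\lambda_{d-k}\le\bigl(T/(d-k)\bigr)^{d-k}$. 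That is exactly your one-step argument. Your proof of \eqref{e2} therefore supplies the step the paper's chain silently relies on, and drops the detour through the bottom block. As you note, it also shows the ordering is used only to name the blocks.

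Your reading of the hypothesis is also the right one: all $d$ roots of $\phi$, counted with multiplicity, are assumed real and positive, not merely the $H$-eigenvalues. This is what the paper implicitly uses when it invokes Theorem~\ref{thrm2.6}(c) and (d).
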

\noindent \textbf{Proof.}
    In order to prove inequality (\ref{e1}), we start with
\begin{align*}
    (\lambda_{k+1}\lambda_{k+2}\cdots \lambda_{d})^{1/(d-k)}&\leq \dfrac{\lambda_{k+1}+\lambda_{k+2}+\cdots+ \lambda_{d}}{d-k},\\
     \bigg(\dfrac{\lambda_{1}\cdots \lambda_{k}\lambda_{k+1}\lambda_{k+2}\cdots \lambda_{d}}{\lambda_{1}\cdots \lambda_{k}}\bigg)^{1/(d-k)}&\leq \dfrac{  (m-1)^{(n-1)} tr(\mc{A})-(\lambda_{1}+\lambda_{2}+\cdots+ \lambda_{k})}{d-k},\\
    \bigg(\dfrac{\det(\mc{A})}{\lambda_{1}\cdots \lambda_{k}}\bigg)^{1/(d-k)}&\leq \dfrac{(m-1)^{(n-1)} tr(\mc{A})-(\lambda_{1}+\lambda_{2}+\cdots+ \lambda_{k})}{d-k}.
\end{align*}
Again, 
\begin{align*}
     (\lambda_{1}\lambda_{2}\cdots \lambda_{k})^{1/k}&\leq \dfrac{\lambda_{1}+\lambda_{2}+\cdots+\lambda_{k}}{k}\\
     \lambda_{1}\lambda_{2}\cdots \lambda_{k}&\leq \bigg(\dfrac{\lambda_{1}+\lambda_{2}+\cdots+\lambda_{k}}{k}\bigg)^{k}\\
   \dfrac{1}{\lambda_{1}\lambda_{2}\cdots \lambda_{k}}&\geq \bigg(\dfrac{k}{\lambda_{1}+\lambda_{2}+\cdots+\lambda_{k}}\bigg)^{k}.
\end{align*}
Also, we have $$ \dfrac{k}{\lambda_{1}+\lambda_{2}+\cdots +\lambda_{k}}\geq  \dfrac{k}{(m-1)^{(n-1)} tr(\mc{A})}.$$
 Since $\lambda_1 \geq \lambda_2 \geq \cdots \geq \lambda_{d} > 0,$ and from Theorem \ref{thrm2.6}(d), we have that the sum of all the eigenvalues of $\mc{A}$ is
$$
(m-1)^{(n-1)} tr(\mc{A}).
$$ The denominator on the right-hand side of the inequality is greater than the denominator on the left-hand side.
\\
Therefore, we get 
$$
    \bigg[\bigg(\dfrac{k}{(m-1)^{(n-1)} tr(\mc{A})}\bigg)^k\det(\mc{A})\bigg]^{1/(d-k)}\leq \dfrac{(m-1)^{(n-1)}tr(\mc{A})-(\lambda_{1}+\lambda_{2}+\cdots+ \lambda_{k})}{d-k},$$ which implies
    $$\lambda_{1}+\lambda_{2}+\cdots+ \lambda_{k}\leq (m-1)^{(n-1)} tr(\mc{A})-(d-k)\left[\left(\frac{k}{ (m-1)^{(n-1)} tr(\mc{A})}\right)^{k} \det (\mc{A})\right]^{1 /(d-k)}.$$
For proving (\ref{e2}), we have 

\begin{align*}
    (\lambda_{k+1}\lambda_{k+2}\cdots \lambda_{d})^{1/(d-k)}& \leq \dfrac{\lambda_{k+1}+\lambda_{k+2}+\cdots+ \lambda_{d}}{d-k},\\
     \bigg(\dfrac{\det(\mc{A})}{\lambda_{1}\lambda_{2}\cdots \lambda_{k}}\bigg)^{1/d-k}&\leq \dfrac{\lambda_{k+1}+\lambda_{k+2}+\cdots+ \lambda_{d}}{d-k}\\
     \dfrac{\det(\mc{A})}{\lambda_{1}\lambda_{2}\cdots \lambda_{k}}&\leq \bigg(\dfrac{\lambda_{k+1}+\lambda_{k+2}+\cdots+ \lambda_{d}}{d-k}\bigg)^{d-k},\\
    \dfrac{\det(\mc{A})}{\lambda_{1}\lambda_{2}\cdots \lambda_{k}}&\leq \dfrac{\det(\mc{A})}{\lambda_{d-k+1}\lambda_{d-k+2}\cdots \lambda_{d}}\leq \bigg(\dfrac{(m-1)^{n-1}tr(\mc{A})}{d-k}\bigg)^{d-k},\\
  % \dfrac{\lambda_{d-k+1}\lambda_{d-k+2}\cdots \lambda_{d}}{\det(\mc{A})} &\geq  \bigg(\dfrac{d-k}{(m-1)^{n-1}tr(\mc{A})}\bigg)^{d-k}\\
  \lambda_{d-k+1}\lambda_{d-k+2}\cdots \lambda_{d}&\geq \bigg(\dfrac{d-k}{(m-1)^{n-1}tr(\mc{A})}\bigg)^{d-k}\det(\mc{A}).
\end{align*}
\hfill \qedsymbol
\begin{theorem}\label{thrm2}
Let $\mc{A}\in\mb{R}^{[n,m]}$ be a symmetric tensor of order $m$ and dimension $n$ with $\lambda_1 \geq \lambda_2 \geq \cdots \geq \lambda_{d} > 0,$ where $d=n(m-1)^{(n-1)}$  and
let $1\leq k \leq d,$   then

\begin{equation}\label{e3}
    (\lambda_{d-k+1}\cdots \lambda_{d})^{1/k}\leq (\det(\mc{A}))^{1/d}\leq (\lambda_{1}\cdots \lambda_{k})^{1/k}\leq \dfrac{\lambda_{1}+\lambda_{2}+\ldots+\lambda_{k}}{k},
\end{equation}
\begin{equation}\label{e4}
    (\lambda_{d-k+1}\cdots \lambda_{d})^{1/k}\leq \dfrac{\lambda_{d-k+1}+\ldots+\lambda_{d}}{k}\leq \dfrac{(m-1)^{(n-1)}tr(\mc{A})}{d}\leq \dfrac{\lambda_{1}+\ldots+\lambda_{k}}{k}.
\end{equation}

\end{theorem}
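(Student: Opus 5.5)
The plan is to treat $\lambda_1\ge\cdots\ge\lambda_d>0$ purely as an ordered list of positive reals. We will use only two facts from Theorem \ref{thrm2.6}: part (c), $\prod_{i=1}^d\lambda_i=\det(\mc{A})$, and part (d), $\sum_{i=1}^d\lambda_i=(m-1)^{(n-1)}tr(\mc{A})$. Every inequality in (\ref{e3}) and (\ref{e4}) then reduces to one of two elementary principles: AM--GM on a block of $k$ eigenvalues, or the statement that the mean (arithmetic or geometric) of the $k$ smallest terms of an ordered list is at most the mean of the whole list, which in turn is at most the mean of the $k$ largest terms.

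First, the last inequality in (\ref{e3}) and the first inequality in (\ref{e4}) are direct instances of AM--GM applied to $\lambda_1,\dots,\lambda_k$ and to $\lambda_{d-k+1},\dots,\lambda_d$, respectively.

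Second, for the geometric comparisons in (\ref{e3}), we pass to logarithms. Set $\mu_i=\log\lambda_i$, which is legitimate since every $\lambda_i>0$; the $\mu_i$ are still nonincreasing. The key lemma is that for a nonincreasing sequence $\mu_1\ge\cdots\ge\mu_d$ and any $1\le k\le d$,
\[
\frac1k\sum_{i=d-k+1}^{d}\mu_i\;\le\;\frac1d\sum_{i=1}^{d}\mu_i\;\le\;\frac1k\sum_{i=1}^{k}\mu_i .
\]
To prove the right-hand inequality, write $S_k=\sum_{i\le k}\mu_i$ and $S=\sum_{i\le d}\mu_i$. Every term among the first $k$ is at least $\mu_k$, and every term among the last $d-k$ is at most $\mu_k$. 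Hence $S-S_k\le (d-k)\mu_k\le (d-k)S_k/k$, which rearranges to $S/d\le S_k/k$. The left-hand inequality follows symmetrically, or by applying the same argument to $-\mu_{d+1-i}$. Exponentiating, and using $S=\log\det(\mc{A})$, gives $(\lambda_{d-k+1}\cdots\lambda_d)^{1/k}\le\det(\mc{A})^{1/d}\le(\lambda_1\cdots\lambda_k)^{1/k}$.

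Third, for the remaining two inequalities in (\ref{e4}), we apply the same lemma directly to the $\lambda_i$ rather than to their logarithms. The whole-list mean is now $\frac1d\sum_i\lambda_i=\frac{(m-1)^{(n-1)}tr(\mc{A})}{d}$ by Theorem \ref{thrm2.6}(d). The lemma sandwiches this between the mean of the $k$ smallest and the mean of the $k$ largest eigenvalues, which is exactly what (\ref{e4}) asserts.

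The main point requiring care is the averaging lemma; everything else is bookkeeping. One also has to note that positivity of all $\lambda_i$ is what justifies taking logarithms and real $k$-th and $d$-th roots. This is why the hypothesis $\lambda_d>0$, meaning positive definite with all $d$ eigenvalues real, is needed. The edge case $k=d$ makes all the means coincide, so the chains collapse to equalities and AM--GM.
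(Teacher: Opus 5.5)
Your proof is correct and follows essentially the paper's route: AM--GM for the outer inequalities, plus the fact that the mean of the $k$ largest terms dominates the whole-list mean, which in turn dominates the mean of the $k$ smallest, with Theorem \ref{thrm2.6}(c),(d) identifying the whole-list means. The paper gets the middle comparison by showing prefix and suffix averages are monotone in $k$ (inequalities (\ref{e5})--(\ref{e6})) and iterating up to $k=d$, and only says that (\ref{e3}) follows by a ``similar process''; your one-step pivot argument and your passage to $\log\lambda_i$ make that geometric case explicit.
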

\noindent \textbf{Proof.} For (\ref{e4}), we start proving the inequality below by using the given fact that  $\lambda_1 \geq \lambda_2 \geq \cdots \geq \lambda_{d} > 0$,
\begin{equation}\label{e5}
    \dfrac{\lambda_{1}+\lambda_{2}+\ldots+\lambda_{k}}{k} \geq \dfrac{\lambda_{1}+\lambda_{2}+\ldots+\lambda_{k+1}}{k+1},
\end{equation}
\begin{equation}\label{e6}
    \dfrac{\lambda_{d-k+1}+\lambda_{d-k+2}+\ldots+\lambda_{d}}{k} \leq \dfrac{\lambda_{d-k}+\lambda_{d-k+1}+\ldots+\lambda_{d}}{k+1}.
\end{equation}
So, starting with (\ref{e5})
\begin{align*}
    (k+1)(\lambda_{1}+\lambda_{2}+\ldots+\lambda_{k}) &- k(\lambda_{1}+\lambda_{2}+\ldots+\lambda_{k+1})\\
    &=k(\lambda_{1}+\lambda_{2}+\ldots+\lambda_{k}) +(\lambda_{1}+\lambda_{2}+\ldots+\lambda_{k}) \\&\quad\quad- k(\lambda_{1}+\lambda_{2}+\ldots+\lambda_{k}) - k \lambda_{k+1},\\
   & =(\lambda_{1}+\lambda_{2}+\ldots+\lambda_{k}) - k \lambda_{k+1} \geq 0.
\end{align*}
Similarly (\ref{e6}) can also be proved. \\

\begin{align*}
     &(k+1)(\lambda_{d-k+1}+\lambda_{d-k+2}+\ldots+\lambda_{d})- k(\lambda_{d-k}+\lambda_{d-k+1}+\ldots+\lambda_{d}) \\
     &=k(\lambda_{d-k+1}+\lambda_{d-k+2}+\ldots+\lambda_{d}) + (\lambda_{d-k+1}+\lambda_{d-k+2}+\ldots+\lambda_{d})\\
     &~~~~~~~~~~~~~~~-k(\lambda_{d-k+1}+\lambda_{d-k+2}+\ldots+\lambda_{d}) - k\lambda_{d-k}\\
     &=(\lambda_{d-k+1}+\lambda_{d-k+2}+\ldots+\lambda_{d}) - k\lambda_{d-k} \leq 0
\end{align*}

 \noindent If we add quantities that are less than every other element then the average will be decreased. Similarly, the average will be increased if we add quantities that are greater than every other element. Hence, now by using (\ref{e5}) and (\ref{e6}), we can say that the inequality (\ref{e4}) is inductively proved. The extreme left of the inequality is obvious due to the fact that AM $\geq$ GM. For (\ref{e3}), a similar process can be followed as was done for proving \eqref{e4}.

\hfill \qedsymbol\\
\noindent \textbf{\small Observations:} \begin{enumerate}
    \item The bounds mentioned in the theorems above are for the sum/product of the first $k$ eigenvalues and similarly for the last $k$ eigenvalues. In addition, this can give information(bound) about the largest eigenvalue and smallest eigenvalue of the tensor.
    \item The alignment of the eigenvalues is done in such a way that $\lambda_1$ refers to the largest eigenvalue and $\lambda_d$ refers to the smallest eigenvalues.
    \item Take $k=1$ in equation \eqref{e1} and \eqref{e2}, it gives the upper bound for the largest eigenvalue and lower bound for the smallest eigenvalue respectively.
    \item  Similarly, if we take $k=1$ in equation \eqref{e4}, we obtain an upper bound for the smallest eigenvalue.
\end{enumerate}

\begin{theorem}\label{thrm3}
  Let $\mc{A}\in\mb{R}^{[n,m]}$ be a symmetric tensor of order $m$ and dimension $n$ with $\lambda_1 \geq \lambda_2 \geq \cdots \geq \lambda_{d} > 0,$ where $d=n(m-1)^{(n-1)}$ and let $1 \leqslant k \leqslant l \leqslant d-1$. Then
$$
\begin{aligned}
{\left[\left(\frac{k-1}{(m-1)^{(n-1)}tr(\mc{A})}\right)^{k-1} \operatorname{det}\mc{A}\right]^{1 /(d-k+1)} } & \leqslant\left(\lambda_k \cdots \lambda_l\right)^{1 /(l-k+1)}, \\
& \leqslant \frac{\lambda_k+\cdots+\lambda_l}{l-k+1}, \\
& \leqslant \frac{(m-1)^{(n-1)}tr(\mc{A})}{l}\\ &-\left(\frac{d}{l}-1\right)\left[\left(\frac{l}{(m-1)^{(n-1)}tr(\mc{A})}\right)^l \operatorname{det} \mc{A}\right]^{1 /(d-l)} .
\end{aligned}
$$
\end{theorem}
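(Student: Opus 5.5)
Write $T=(m-1)^{(n-1)}tr(\mc{A})$ and $D=\det(\mc{A})$. By Theorem \ref{thrm2.6}(c),(d), $T=\lambda_1+\cdots+\lambda_d$ and $D=\lambda_1\cdots\lambda_d$, with all $\lambda_i>0$. I would prove the three inequalities of the chain separately. The middle one, $(\lambda_k\cdots\lambda_l)^{1/(l-k+1)}\le \frac{\lambda_k+\cdots+\lambda_l}{l-k+1}$, is AM--GM applied to the $l-k+1$ positive numbers $\lambda_k,\dots,\lambda_l$.

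For the right-hand inequality I would first compare block averages. The numbers $\lambda_k,\dots,\lambda_l$ are the smallest $l-k+1$ among $\lambda_1,\dots,\lambda_l$. Their average is therefore at most the average of all of $\lambda_1,\dots,\lambda_l$, which gives
\[
\frac{\lambda_k+\cdots+\lambda_l}{l-k+1}\le\frac{\lambda_1+\cdots+\lambda_l}{l}.
\]
This is the same "adding smaller elements lowers the average" argument as \eqref{e5}--\eqref{e6}, iterated, or equivalently the lower part of \eqref{e4} applied to the list $\lambda_1,\dots,\lambda_l$. Next I would apply \eqref{e1} of Theorem \ref{thrm1} with $k$ replaced by $l$, which is allowed since $1\le l\le d-1$, and divide by $l$:
\[
\frac{\lambda_1+\cdots+\lambda_l}{l}\le \frac{T}{l}-\frac{d-l}{l}\Big[\Big(\frac{l}{T}\Big)^l D\Big]^{1/(d-l)}.
\]
Since $\frac{d-l}{l}=\frac{d}{l}-1$, this is exactly the stated right-hand side.

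For the left-hand inequality I would proceed in two steps.
\begin{enumerate}
\item Since the list is decreasing, $\lambda_k,\dots,\lambda_l$ are the largest among $\lambda_k,\dots,\lambda_d$. Hence their geometric mean is at least that of the whole tail: $(\lambda_k\cdots\lambda_l)^{1/(l-k+1)}\ge(\lambda_k\cdots\lambda_d)^{1/(d-k+1)}$. This is the multiplicative analogue of \eqref{e6}; taking logarithms reduces it to the same averaging statement for $\log\lambda_i$.
\item Write $\lambda_k\cdots\lambda_d=D/(\lambda_1\cdots\lambda_{k-1})$. For $k\ge2$, AM--GM gives $\lambda_1\cdots\lambda_{k-1}\le\big(\frac{\lambda_1+\cdots+\lambda_{k-1}}{k-1}\big)^{k-1}$. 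Since the remaining eigenvalues are positive, $\lambda_1+\cdots+\lambda_{k-1}\le T$, so $\lambda_1\cdots\lambda_{k-1}\le (T/(k-1))^{k-1}$, and hence $\lambda_k\cdots\lambda_d\ge\big(\frac{k-1}{T}\big)^{k-1}D$.
\end{enumerate}
Raising the bound in step 2 to the power $1/(d-k+1)$ and combining it with step 1 gives the left inequality. This mirrors the proof of \eqref{e2}.

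The only delicate point is the case $k=1$. There the factor reads $0^0$, which must be interpreted as $1$, so the claim becomes $D^{1/d}\le(\lambda_1\cdots\lambda_l)^{1/l}$. This already follows from step 1 (with $k=1$), i.e.\ it is the content of \eqref{e3}. The block-mean monotonicity in step 1 and in the right-hand comparison is the one place requiring an actual argument rather than a citation. I would handle it by the same induction on block length used for \eqref{e5}--\eqref{e6}.
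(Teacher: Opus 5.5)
Your proposal is correct and follows the paper's own route. The paper uses the same chain $(\lambda_k\cdots\lambda_d)^{1/(d-k+1)}\le(\lambda_k\cdots\lambda_l)^{1/(l-k+1)}\le\frac{\lambda_k+\cdots+\lambda_l}{l-k+1}\le\frac{\lambda_1+\cdots+\lambda_l}{l}$, and bounds the two ends with the Theorem~\ref{thrm1} arguments. Your step 2 is \eqref{e2} with $k$ replaced by $d-k+1$, your right end is \eqref{e1} with $l$ in place of $k$, and your explicit handling of $k=1$ via $0^0=1$ fills in a detail the paper leaves tacit.
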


\noindent \textbf{Proof.} This theorem contains Theorem \ref{thrm1} and \ref{thrm2} as a particular case.  Starting with
    
$$
\begin{aligned}
\left(\lambda_k \cdots \lambda_d\right)^{1 /(d-k+1)} & \leqslant\left(\lambda_k \cdots \lambda_l\right)^{1 /(l-k+1)}, \\
& \leqslant \frac{\lambda_k+\cdots+\lambda_l}{l-k+1} \leqslant \frac{\lambda_1+\cdots+\lambda_l}{l},
\end{aligned}
$$
and proceed as in the proofs of Theorem \ref{thrm1} and \ref{thrm2}.
\hfill \qedsymbol

\begin{remark}\label{rmrk1}
     AM-GM inequality provided us with bounds that are mentioned in the above three theorems. Now, we will fetch another type of bounds which we will calculate by using ameliorate version of AM-GM inequality.
\end{remark}

\begin{theorem}\label{thrm4}
 Let $\mc{A}\in\mb{R}^{[n,m]}$ be a symmetric tensor of order $m$ and dimension $n$ with $\lambda_1 \geq \lambda_2 \geq \cdots \geq \lambda_{d} > 0,$ where $d=n(m-1)^{(n-1)}$  and let $1\leq k\leq d-2.$ Then 
 \begin{equation}\label{e7}
     \lambda_{1}\cdots \lambda_{k}\leq \bigg\{ \dfrac{1}{\det (\mc{A})}\bigg[\dfrac{1}{d-k}\bigg(\dfrac{(m-1)^{(n-1)}tr (\mc{A})}{k+1}\bigg)^{k+1}\bigg]^{d-k}\bigg\}^{\dfrac{1}{d-k-1}}.
 \end{equation}
 Let $2\leq k \leq d-1.$ Then 
 \begin{equation}\label{e8}
     \lambda_{d-k+1}\cdots \lambda_{d}\geq \bigg[k \det(\mc{A})\bigg(\dfrac{d-k+1}{(m-1)^{n-1}tr(\mc{A})}\bigg)^{d-k+1}\bigg]^{k/(k-1)}.
 \end{equation}
\end{theorem}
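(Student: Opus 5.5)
Both inequalities will follow from the same device: apply AM--GM twice. First, AM--GM is applied to a block of eigenvalues, bounding its product by the $k$th (or $(d-k)$th) power of its mean. Second, the sum of that block is treated as a \emph{single} number, and AM--GM is applied to it together with the remaining individual eigenvalues, so that the total sum is exactly $T:=(m-1)^{(n-1)}tr(\mc{A})$ by Theorem~\ref{thrm2.6}(d). Throughout we write $D:=\det(\mc{A})=\lambda_1\cdots\lambda_d$ (Theorem~\ref{thrm2.6}(c)); all quantities are positive since $\lambda_d>0$.

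For (\ref{e7}), put $P=\lambda_1\cdots\lambda_k$, $Q=\lambda_{k+1}\cdots\lambda_d$ and $S=\lambda_{k+1}+\cdots+\lambda_d$, so that $D=PQ$. AM--GM on the tail of $d-k$ eigenvalues gives $Q\le (S/(d-k))^{d-k}$. Next, AM--GM on the $k+1$ positive numbers $\lambda_1,\dots,\lambda_k,S$, whose sum is $T$, gives $PS\le (T/(k+1))^{k+1}$. Combining the two,
\begin{equation*}
P^{d-k}Q\le \Big(\frac{PS}{d-k}\Big)^{d-k}\le\Big[\frac{1}{d-k}\Big(\frac{T}{k+1}\Big)^{k+1}\Big]^{d-k}.
\end{equation*}
Since $P^{d-k}Q=P^{d-k-1}D$, dividing by $D$ and taking the $(d-k-1)$th root yields (\ref{e7}). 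The hypothesis $k\le d-2$ is exactly what makes this exponent positive.

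For (\ref{e8}), put $R=\lambda_{d-k+1}\cdots\lambda_d$, $H=\lambda_1\cdots\lambda_{d-k}$ and $s=\lambda_{d-k+1}+\cdots+\lambda_d$, so that $D=HR$. AM--GM on the last $k$ eigenvalues gives $kR^{1/k}\le s$. AM--GM on the $d-k+1$ numbers $\lambda_1,\dots,\lambda_{d-k},s$, whose sum is $T$, gives $Hs\le (T/(d-k+1))^{d-k+1}$. Hence
\begin{equation*}
kHR^{1/k}\le\Big(\frac{T}{d-k+1}\Big)^{d-k+1}.
\end{equation*}
Multiplying both sides by $R^{(k-1)/k}$ and using $HR=D$ gives $kD\,((d-k+1)/T)^{d-k+1}\le R^{(k-1)/k}$. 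Raising to the power $k/(k-1)$, which is legitimate because $k\ge 2$ and both sides are positive, gives (\ref{e8}).

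The only step I expect to need care is spotting the right grouping: the tail (respectively head) sum must be treated as a single AM--GM variable so that the full trace identity applies. This is the ``ameliorated'' AM--GM of Remark~\ref{rmrk1}. After that, the remaining work is exponent bookkeeping, chiefly matching $P^{d-k}Q=P^{d-k-1}D$ and $HR^{1/k}=DR^{-(k-1)/k}$ to the stated forms, and checking that the ranges of $k$ keep every root well defined.
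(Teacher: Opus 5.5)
Your proof is correct and follows the paper's argument. You apply AM--GM to the $k+1$ (resp.\ $d-k+1$) numbers, treating the tail (resp.\ last-$k$) sum as a single variable so that the total is $(m-1)^{(n-1)}tr(\mc{A})$, then apply AM--GM within that block and use $\det(\mc{A})=\lambda_1\cdots\lambda_d$. Your exponent bookkeeping for the lower bound on $\lambda_{d-k+1}\cdots\lambda_d$ (multiplying by $R^{(k-1)/k}$ and using $HR=\det(\mc{A})$) is cleaner than the paper's intermediate steps, which carry a stray outer $1/k$ exponent before reaching the same final inequality.
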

\noindent \textbf{Proof.} 
 Let us start with $k+1$ elements in AM-GM inequality
\begin{align*}
    \lambda_{1}\cdots \lambda_{k}(\lambda_{k+1}+\lambda_{k+2}+\cdots+\lambda_{d})&\leq \bigg(\dfrac{\lambda_{1}+\cdots+\lambda_{k}+(\lambda_{k+1}+\lambda_{k+2}+\cdots+\lambda_{d})}{k+1}\bigg)^{k+1},\\
    &=\bigg(\dfrac{(m-1)^{n-1}tr(\mc{A})}{k+1}\bigg)^{k+1}.
\end{align*}

\noindent Dividing by $(d-k)$ on both side, we obtain
\begin{align*}
   \frac{\lambda_{1}\cdots \lambda_{k}(\lambda_{k+1}+\lambda_{k+2}+\cdots+\lambda_{d})}{d-k}& \leq \dfrac{1}{d-k}\bigg(\dfrac{(m-1)^{n-1}tr(\mc{A})}{k+1}\bigg)^{k+1}.
\end{align*}
Using AM-GM inequality on left-hand side, we obtain

\begin{align*}
    \lambda_{1}\cdots \lambda_{k}(\lambda_{k+1}\lambda_{k+2}\cdots\lambda_{d})^{1/(d-k)}&\leq  \dfrac{1}{d-k}\bigg(\dfrac{(m-1)^{n-1}tr(\mc{A})}{k+1}\bigg)^{k+1},\\
    % (\lambda_{1}\cdots \lambda_{k})^{(d-k)}(\lambda_{k+1}\lambda_{k+2}\cdots\lambda_{d}) & \leq \bigg[\dfrac{1}{d-k}\bigg(\dfrac{(m-1)^{n-1}tr(\mc{A})}{k+1}\bigg)^{k+1}\bigg]^{(d-k)}  \\ 
     (\lambda_{1}\cdots \lambda_{k})^{(d-k-1)} det(\mathcal{A}) & \leq \bigg[\dfrac{1}{d-k}\bigg(\dfrac{(m-1)^{n-1}tr(\mc{A})}{k+1}\bigg)^{k+1}\bigg]^{(d-k)},\\
      (\lambda_{1}\cdots \lambda_{k}) & \leq \bigg[ \dfrac{1}{det(\mathcal{A})} \bigg[\dfrac{1}{d-k}\bigg(\dfrac{(m-1)^{n-1}tr(\mc{A})}{k+1}\bigg)^{k+1}\bigg]^{(d-k)}\bigg]^{1/(d-k-1)}.
\end{align*}

\noindent Therefore, $$\lambda_{1}\cdots \lambda_{k}\leq \bigg\{ \dfrac{1}{\det (\mc{A})}\bigg[\dfrac{1}{d-k}\bigg(\dfrac{(m-1)^{(n-1)}tr (\mc{A})}{k+1}\bigg)^{k+1}\bigg]^{d-k}\bigg\}^{\dfrac{1}{d-k-1}}.$$

\noindent Now, for eq. (8) we start with 

\begin{align*}
    \lambda_{1}\cdots \lambda_{d-k}(\lambda_{d-k+1}+\lambda_{d-k+2}+\cdots+\lambda_{d})&\leq \bigg(\dfrac{\lambda_{1}+\cdots+\lambda_{d-k}+(\lambda_{d-k+1}+\lambda_{d-k+2}+\cdots+\lambda_{d})}{d-k+1}\bigg)^{d-k+1},\\
    &=\bigg(\dfrac{(m-1)^{n-1}tr(\mc{A})}{d-k+1}\bigg)^{d-k+1},\\
      (k)\lambda_{1}\cdots \lambda_{d-k}(\lambda_{d-k+1}\lambda_{d-k+2}\cdots\lambda_{d})^{1/k} & \leq \bigg(\dfrac{(m-1)^{n-1}tr(\mc{A})}{d-k+1}\bigg)^{d-k+1}, \\
     \bigg(\dfrac{1}{(k)\lambda_{1}\cdots \lambda_{d-k}(\lambda_{d-k+1}\lambda_{d-k+2}\cdots\lambda_{d})^{1/k}}\bigg) & \geq  \bigg(\dfrac{d-k+1}{(m-1)^{n-1}tr(\mc{A})}\bigg)^{d-k+1},
\end{align*}
multiplying and dividing by $(\lambda_{d-k+1}\lambda_{d-k+2}\cdots\lambda_{d})$ on left-hand side , we obtain 
\begin{align*}
      \bigg(\dfrac{(\lambda_{d-k+1}\lambda_{d-k+2}\cdots\lambda_{d})}{\lambda_{1}\cdots \lambda_{d-k}(\lambda_{d-k+1}\lambda_{d-k+2}\cdots\lambda_{d})^{1/k}(\lambda_{d-k+1}\lambda_{d-k+2}\cdots\lambda_{d})}\bigg)^{1/k} & \geq k\bigg(\dfrac{d-k+1}{(m-1)^{n-1}tr(\mc{A})}\bigg)^{d-k+1},\\
       \bigg(\dfrac{(\lambda_{d-k+1}\lambda_{d-k+2}\cdots\lambda_{d})}{det(\mc{A})(\lambda_{d-k+1}\lambda_{d-k+2}\cdots\lambda_{d})^{1/k}}\bigg)^{1/k} & \geq k\bigg(\dfrac{d-k+1}{(m-1)^{n-1}tr(\mc{A})}\bigg)^{d-k+1},\\
       (\lambda_{d-k+1}\lambda_{d-k+2}\cdots\lambda_{d})^{1-(1/k)} & \geq k det(\mc{A})\bigg(\dfrac{d-k+1}{(m-1)^{n-1}tr(\mc{A})}\bigg)^{d-k+1}.
\end{align*}
After rearranging the exponent on both sides we get 
$$      \lambda_{d-k+1}\cdots \lambda_{d} \geq \bigg[k \det(\mc{A})\bigg(\dfrac{d-k+1}{(m-1)^{n-1}tr(\mc{A})}\bigg)^{d-k+1}\bigg]^{k/(k-1)}.$$

\hfill \qedsymbol
\begin{theorem}\label{thrm5}
   Let $\mc{A}\in\mb{R}^{[n,m]}$ be a symmetric tensor of order $m$ and dimension $n$ with $\lambda_1 \geq \lambda_2 \geq \cdots \geq \lambda_{d} > 0,$ where $d=n(m-1)^{(n-1)}$ and let $1\leq k \leq l\leq d-2.$ Then 

    \begin{equation}\label{eq9}
        \bigg[(d-k+1)det(\mc{A})\bigg(\dfrac{k}{(m-1)^{n-1}tr(\mc{A})}\bigg)^{k}\bigg]^{\dfrac{l-k+1}{d-k}} \leq \lambda_{k}\cdots \lambda_{l},
    \end{equation}
    \begin{equation}\label{eq10}
         \lambda_{k}\cdots \lambda_{l} \leq \bigg[\dfrac{1}{det(\mc{A})}\bigg(\dfrac{1}{d-l}\bigg(\dfrac{(m-1)^{(n-1)}tr(\mc{A})}{l+1}\bigg)^{l+1}\bigg)^{d-l}\bigg]^{\dfrac{l-k+1}{l(d-l-1)}}.
    \end{equation}
    %      \begin{align*}
    %     &\bigg[(d-k+1)det(\mc{A})\bigg(\dfrac{k}{(m-1)^{n-1}tr(\mc{A})}\bigg)^{k}\bigg]^{\dfrac{l-k+1}{d-k}}~~~~~~~~~\\
    %     &~~~~~~~~~~~\leq \lambda_{k}\cdots \lambda_{l}\\
    %    &~~~~~~~~~~~\leq \bigg[\dfrac{1}{det(\mc{A})}\bigg(\dfrac{1}{d-l}\bigg(\dfrac{(m-1)^{(n-1)}tr(\mc{A})}{l+1}\bigg)^{l+1}\bigg)^{d-l}\bigg]^{\dfrac{l-k+1}{l(d-l-1)}} 
    % \end{align*}
   
\end{theorem}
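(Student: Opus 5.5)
The plan is to prove the two inequalities separately. Each follows the same pattern: apply an ameliorated AM--GM estimate, as in the proof of Theorem \ref{thrm4}, to a suitable block of eigenvalues. Then pass to the middle block $\lambda_k,\dots,\lambda_l$ using the monotonicity of geometric means of ordered positive numbers. Write $T=(m-1)^{(n-1)}tr(\mc{A})$, which by Theorem \ref{thrm2.6}(d) equals $\lambda_1+\cdots+\lambda_d$, and recall $\det(\mc{A})=\lambda_1\cdots\lambda_d$ by Theorem \ref{thrm2.6}(c).

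\textbf{Upper bound (\ref{eq10}).} Apply (\ref{e7}) of Theorem \ref{thrm4} with $k$ replaced by $l$, which is allowed since $l\le d-2$. This gives
\[
\lambda_1\cdots\lambda_l\le B_l:=\Big\{\tfrac{1}{\det(\mc{A})}\Big[\tfrac{1}{d-l}\Big(\tfrac{T}{l+1}\Big)^{l+1}\Big]^{d-l}\Big\}^{1/(d-l-1)}.
\]
Since $\lambda_1\ge\cdots\ge\lambda_l>0$, the geometric mean of the tail block $\lambda_k,\dots,\lambda_l$ is at most the geometric mean of $\lambda_1,\dots,\lambda_l$. This is the multiplicative analogue of (\ref{e5})--(\ref{e6}) and follows from the same induction after taking logarithms. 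Hence
\[
(\lambda_k\cdots\lambda_l)^{1/(l-k+1)}\le(\lambda_1\cdots\lambda_l)^{1/l}\le B_l^{1/l}.
\]
Raising to the power $l-k+1$ gives exactly the exponent $\frac{l-k+1}{l(d-l-1)}$ in (\ref{eq10}).

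\textbf{Lower bound (\ref{eq9}).} Mimic the proof of (\ref{e8}), but with the first $k-1$ eigenvalues kept separate. Apply AM--GM to the $k$ positive numbers $\lambda_1,\dots,\lambda_{k-1}$ and $(\lambda_k+\cdots+\lambda_d)$, whose sum is $T$:
\[
\lambda_1\cdots\lambda_{k-1}(\lambda_k+\cdots+\lambda_d)\le (T/k)^k .
\]
Next bound the bracket from below by AM--GM: $\lambda_k+\cdots+\lambda_d\ge (d-k+1)P^{1/(d-k+1)}$, where $P=\lambda_k\cdots\lambda_d$. Multiply both sides by $P$ and use $\lambda_1\cdots\lambda_{k-1}P=\det(\mc{A})$. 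This yields
\[
(d-k+1)\det(\mc{A})\,P^{1/(d-k+1)}\le (T/k)^k P,
\]
that is, $P^{(d-k)/(d-k+1)}\ge C:=(d-k+1)\det(\mc{A})(k/T)^k$, so $P^{1/(d-k+1)}\ge C^{1/(d-k)}$. Here $d-k>0$ since $k\le d-2$. For $k=1$ the leading product is empty and the argument still goes through.

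Finally, by the ordering, the geometric mean of the head block $\lambda_k,\dots,\lambda_l$ is at least that of $\lambda_k,\dots,\lambda_d$. Therefore
\[
(\lambda_k\cdots\lambda_l)^{1/(l-k+1)}\ge P^{1/(d-k+1)}\ge C^{1/(d-k)},
\]
and raising to the power $l-k+1$ gives (\ref{eq9}).

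The main obstacle is the lower bound. One has to choose the right AM--GM grouping (the $k$ terms $\lambda_1,\dots,\lambda_{k-1}$ together with the whole tail sum) so that the constants $(d-k+1)$ and $(k/T)^k$ and the exponent $\frac{l-k+1}{d-k}$ come out exactly as stated. The exponent algebra after multiplying by $P$ is the step to check carefully. The monotonicity of block geometric means is routine.
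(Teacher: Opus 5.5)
Your proposal is correct and follows the paper's proof. For (\ref{eq9}) you use the same $k$-term AM--GM grouping $\lambda_1,\dots,\lambda_{k-1},(\lambda_k+\cdots+\lambda_d)$, then AM--GM on the tail and the ordering of block geometric means; for (\ref{eq10}) the paper re-derives (\ref{e7}) with $k$ replaced by $l$, which you simply cite, and your explicit comparison $(\lambda_k\cdots\lambda_l)^{1/(l-k+1)}\le(\lambda_1\cdots\lambda_l)^{1/l}$ supplies the step the paper calls ``easily obtained'' and yields the exponent $\frac{l-k+1}{l(d-l-1)}$.
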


\noindent \textbf{Proof.} 
Starting with \eqref{eq9} of the above inequality 
\begin{align*}
    \lambda_{1}\cdots \lambda_{k-1}(\lambda_{k}+\lambda_{k+1}+\cdots+\lambda_{d})&\leq \bigg(\dfrac{\lambda_{1}+\cdots+\lambda_{k-1}+(\lambda_{k}+\lambda_{k+1}+\cdots+\lambda_{d})}{k}\bigg)^{k},\\
    &=\bigg(\dfrac{(m-1)^{n-1}tr(\mc{A})}{k}\bigg)^{k},\\
    \bigg(\dfrac{1}{\lambda_{1}\cdots \lambda_{k-1}(\lambda_{k}+\lambda_{k+1}+\cdots+\lambda_{d})}\bigg) & \geq \bigg(\dfrac{k}{(m-1)^{n-1}tr(\mc{A})}\bigg)^{k}.
\end{align*}
By using AM-GM inequality and reversing the sides, we get 
\begin{align*}
   \bigg(\dfrac{k}{(m-1)^{n-1}tr(\mc{A})}\bigg)^{k}  & \leq \bigg(\dfrac{1}{(d-k+1)\lambda_{1}\cdots \lambda_{k-1}(\lambda_{k}\cdots \lambda_{d})^{1/(d-k+1)}}\bigg),\\
    (d-k+1)\bigg(\dfrac{k}{(m-1)^{n-1}tr(\mc{A})}\bigg)^{k}  & \leq \bigg(\dfrac{(\lambda_{k}\cdots \lambda_{d})}{(d-k+1)\lambda_{1}\cdots \lambda_{k-1}(\lambda_{k}\cdots \lambda_{d})(\lambda_{k}\cdots \lambda_{d})^{1/(d-k+1)}}\bigg),\\
    (d-k+1)det(\mc{A})\bigg(\dfrac{k}{(m-1)^{n-1}tr(\mc{A})}\bigg)^{k}  & \leq (\lambda_{k}\lambda_{k+1}\cdots \lambda_{d})^{1-(1/d-k+1)},\\
    & \leq (\lambda_{k}\lambda_{k+1}\cdots \lambda_{d})^{(d-k)/(d-k+1)},\\
    % & \leq (\lambda_{k}\cdots \lambda_{d})^{(d-k)/(l-k+1)}\\
    & \leq (\lambda_{k}\cdots \lambda_{l})^{(d-k)/(l-k+1)}.
\end{align*}
Now we begin with \eqref{eq10}

\begin{align*}
    \lambda_{1}\cdots \lambda_{k}  \cdots \lambda_{l}(\lambda_{l+1}+\cdots+\lambda_{d})\\&\leq \bigg(\dfrac{ \lambda_{1}+\cdots +\lambda_{k}  \cdots +\lambda_{l}(\lambda_{l+1}+\cdots+\lambda_{d})}{l+1}\bigg)^{l+1},\\
    &=\bigg(\dfrac{(m-1)^{n-1}tr(\mc{A})}{l+1}\bigg)^{l+1}.
\end{align*}
 Using AM-GM inequality, we obtain
$$ (d-l)(\lambda_{1}\cdots \lambda_{k}  \cdots \lambda_{l})(\lambda_{l+1}+\lambda_{l+2}+\cdots+\lambda_{d})^{1/(d-l)}  \leq \bigg(\dfrac{(m-1)^{n-1}tr(\mc{A})}{l+1}\bigg)^{l+1} .$$
Taking power of $(d-l)$ on both sides, we get
\begin{align*}
    (\lambda_{1}\cdots \lambda_{k}  \cdots \lambda_{l})^{d-l}(\lambda_{l+1}\cdots\lambda_{d}) & \leq \bigg[ \dfrac{1}{(d-l)}\bigg(\dfrac{ \lambda_{1}+\cdots +\lambda_{k}  \cdots +\lambda_{l}(\lambda_{l+1}+\cdots+\lambda_{d})}{l+1}\bigg)^{l+1}\bigg]^{d-l},\\
    det(\mc{A}) (\lambda_{1}\cdots \lambda_{k}  \cdots \lambda_{l})^{d-l-1} & \leq \bigg[ \dfrac{1}{(d-l)}\bigg(\dfrac{ \lambda_{1}+\cdots +\lambda_{k}  \cdots +\lambda_{l}(\lambda_{l+1}+\cdots+\lambda_{d})}{l+1}\bigg)^{l+1}\bigg]^{d-l}. 
\end{align*}
\noindent Hence, \eqref{eq10} can be easily obtained.
\hfill \qedsymbol

\begin{theorem}\label{thrm6}
    Let $\mc{A}\in\mb{R}^{[n,m]}$ be a symmetric tensor of order $m$ and dimension $n$ with $\lambda_1 \geq \lambda_2 \geq \cdots \geq \lambda_{d} > 0,$ where $d=n(m-1)^{(n-1)}$ and let $1\leq k\leq d.$ Then 
    \begin{equation}\label{eq11}
        \lambda_{1}+\ldots +\lambda_{k}\leq \dfrac{(k+1)^{k+1}}{k^k}\dfrac{1}{det(\mc{A})}\bigg(\dfrac{(m-1)^{n-1}tr(\mc{A})}{d+1}\bigg)^{d+1}.
    \end{equation}
   
\end{theorem}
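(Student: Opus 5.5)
The plan is to bound the single quantity $(\lambda_1+\cdots+\lambda_k)\det(\mc{A})$ from above by the right-hand side of \eqref{eq11} multiplied by $\det(\mc{A})$, using one application of AM-GM on $d+1$ suitably chosen numbers whose sum is exactly $(m-1)^{(n-1)}tr(\mc{A})$. Write $S=\lambda_1+\cdots+\lambda_k$ and $T=(m-1)^{(n-1)}tr(\mc{A})$.

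\textbf{Step 1.} Since all $\lambda_i>0$, Theorem \ref{thrm2.6}(c) gives $\det(\mc{A})=\lambda_1\cdots\lambda_d>0$, and Theorem \ref{thrm2.6}(d) gives $\lambda_1+\cdots+\lambda_d=T$. Factor
\[
S\det(\mc{A}) = S\,(\lambda_1\cdots\lambda_k)\,(\lambda_{k+1}\cdots\lambda_d),
\]
where the last product is empty (equal to $1$) when $k=d$.

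\textbf{Step 2.} Apply AM-GM to the first $k$ eigenvalues, exactly as in the proof of Theorem \ref{thrm1}: $\lambda_1\cdots\lambda_k\le (S/k)^k$. This yields
\[
S\det(\mc{A})\le \frac{S^{k+1}}{k^k}\,\lambda_{k+1}\cdots\lambda_d=\frac{(k+1)^{k+1}}{k^k}\Big(\frac{S}{k+1}\Big)^{k+1}\lambda_{k+1}\cdots\lambda_d .
\]

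\textbf{Step 3.} Apply AM-GM to the $d+1$ positive numbers consisting of $k+1$ copies of $S/(k+1)$ together with $\lambda_{k+1},\dots,\lambda_d$. Their sum is $S+(T-S)=T$, so
\[
\Big(\frac{S}{k+1}\Big)^{k+1}\lambda_{k+1}\cdots\lambda_d\le\Big(\frac{T}{d+1}\Big)^{d+1}.
\]
Combining this with Step 2 and dividing by $\det(\mc{A})>0$ gives \eqref{eq11} for every $1\le k\le d$.

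The only real obstacle is choosing the right grouping for the $d+1$ terms. The natural first attempt, mirroring Theorems \ref{thrm4} and \ref{thrm5}, applies AM-GM directly to $\lambda_1,\dots,\lambda_d$ and $S$. That fails because the sum of those terms is $T+S$ rather than $T$. Splitting $S$ into $k+1$ equal pieces after first absorbing $\lambda_1\cdots\lambda_k$ into $S^k/k^k$ is what makes the total come out to exactly $T$. It also produces the constant $(k+1)^{k+1}/k^k$ in the statement.
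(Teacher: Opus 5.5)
Your proof is correct. It reaches the same bound as the paper by a slightly different route.

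\textbf{The paper's route.} It introduces a free parameter $0<t<1$ and applies a single AM-GM to the $d+1$ numbers $tS,\,(1-t)\lambda_1,\dots,(1-t)\lambda_k,\,\lambda_{k+1},\dots,\lambda_d$, using your $S$ and $T$. Their sum is exactly $T$, which is how the paper avoids the $T+S$ obstruction you identified. This gives $t(1-t)^k S\det(\mathcal{A})\le (T/(d+1))^{d+1}$ for every $t$. It then minimizes $1/(t(1-t)^k)$ by differentiation, and the optimum $t_0=1/(k+1)$ produces the constant $(k+1)^{k+1}/k^k$.

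\textbf{Your route.} Your argument is effectively that same AM-GM at $t=t_0$, carried out in two stages. At $t_0$ the middle $k$ numbers are $\frac{k}{k+1}\lambda_i$. Your preliminary bound $\lambda_1\cdots\lambda_k\le (S/k)^k$ replaces them by their common mean $S/(k+1)$. You then apply AM-GM to $k+1$ equal copies of $S/(k+1)$ together with $\lambda_{k+1},\dots,\lambda_d$.

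\textbf{What each buys.} Your version needs no auxiliary parameter and no calculus, and the constant comes straight from the equal split, so it is cleaner. The paper's version exhibits the whole one-parameter family of inequalities and explains why $1/(k+1)$ is the right weight, though no member of the family beats the optimum you obtain directly. Both arguments handle the edge case $k=d$ without modification.
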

\noindent \textbf{Proof.}  
 Let $0<t<1$. Then, by using AM-GM inequality, we obtain
$$
\begin{aligned}
\left(\frac{(m-1)^{n-1}tr(\mc{A})}{d+1}\right)^{d+1} & =\left(\frac{t\left(\lambda_1+\cdots+\lambda_k\right)+(1-t) \lambda_1+\cdots+(1-t) \lambda_k+\lambda_{k+1}+\cdots+\lambda_d}{d+1}\right)^{d+1}, \\
&\geqslant  t\left(\lambda_1+\cdots+\lambda_k\right)(1-t)^k \lambda_1 \cdots \lambda_d~~,  \\
& =t(1-t)^k\left(\lambda_1+\cdots+\lambda_k\right)\operatorname{det}(\mc{A}).
\end{aligned}
$$
Now,
\begin{equation*}
    \lambda_1+\cdots+\lambda_k \leqslant \frac{1}{t(1-t)^k} \frac{1}{\operatorname{det} \mathcal{A}}\left(\frac{(m-1)^{n-1}tr(\mc{A})}{d+1}\right)^{d+1}.
\end{equation*}
\vspace{0.01cm}

\noindent By employing differentiation to find the minimum value on the RHS, we obtain the result for $t_0 =\frac{1}{k+1}$,

\begin{equation*}
    \frac{1}{t_0\left(1-t_0\right)^k} =\min \left\{\frac{1}{t(1-t)^k} \mid 0<t<1\right\}=\frac{(k+1)^{k+1}}{k^k}.
\end{equation*}

\hfill \qedsymbol
\begin{remark}\label{rmrk2}
     RHS of the equation \eqref{eq11} has only one variable quantity i.e. $k$, so if $k$ increases(number of eigenvalues in the sum increases) then RHS increases. So this equation \eqref{eq11} provides an efficient eigenvalue bound for small values of $k$. As Theorem \ref{thrm1} and \ref{thrm4} also provide the upper bound for the largest eigenvalue, so amongst them which works efficiently will be discussed in further sections. 
\end{remark}
\begin{remark}
All theorems in this section concern positive definite tensors. Since a real
homogeneous polynomial of odd degree cannot be positive definite, these results
apply only to even-order tensors. This aligns with the classical tensor eigenvalue
theory introduced by Qi \cite{qi2005}, where positivity of all eigenvalues is equivalent to
the tensor being of even order and positive definite.
\end{remark}

\section{Analyzing examples}\label{exmp}

\noindent\textbf{Connection with stability.}
To illustrate how the eigenvalue bounds obtained in Section \ref{boen} can be used in Lyapunov
stability analysis, we construct a positive definite even--order tensor and apply Theorem \ref{thrm1} to obtain a certified lower bound on its smallest H--eigenvalue.  This bound then guarantees the positivity of a tensor--generated Lyapunov function  and allows us to verify asymptotic stability of a nonlinear system.
\begin{example}
  Consider the fourth--order, two--dimensional symmetric tensor 
$\mathcal{A} \in \mathbb{R}^{[2,4]}$ with diagonal entries
\[
a_{1111} = 1.1, \qquad a_{2222} = 1.0,
\]
and all other entries zero.  
\end{example}

\noindent\textit{Step 1: Constructing a positive definite tensor.}~The associated homogeneous polynomial
\[
V(x_1,x_2) = 1.1\,x_1^{4} + x_2^{4}
\]
is positive definite since $m=4$ is even and all diagonal coefficients are positive.
The eigenvalue equation $\mathcal{A}x^{3}=\lambda x^{[3]}$ yields the distinct eigenvalues
\[
\lambda_1=1.1,\qquad \lambda_2=1.0,
\]
each with multiplicity three.  
Thus the true smallest eigenvalue is 
\[
\lambda_{\min}^{\text{true}} = 1.0.
\]

\medskip
\noindent\textit{Step 2: Verifying the lower bound using Theorem \ref{thrm1}.}
The trace and determinant of $\mathcal{A}$ are
\[
\operatorname{tr}(\mathcal{A}) = 2.1, \qquad 
\det(\mathcal{A}) = (1.1)^3 (1.0)^3 = 1.331.
\]
For $m=4$ and $n=2$, we have $(m-1)^{n-1}=3$.  
Applying Theorem \ref{thrm1} with $k=d-1$, we obtain
\[
\lambda_{\min} 
\ge 
\frac{\det(\mathcal{A})}
     {(m-1)^{n-1}\operatorname{tr}(\mathcal{A})}
=
\frac{1.331}{3 \cdot 2.1}
=
0.2114.
\]
Since $0.2114 \le 1.0$, the theoretical bound is verified for this tensor.

\medskip
\noindent\textit{Step 3: Using $V(x)$ as a Lyapunov function.}
Consider the nonlinear system $\dot{x}=g(x)$ defined by the negative gradient flow
\[
\dot{x} = g(x) = -\nabla V(x),
\]
where
\[
\nabla V(x)=
\begin{pmatrix}
4.4\,x_1^{3} \\
4\,x_2^{3}
\end{pmatrix}.
\]
Since $\mathcal{A}$ is positive definite, $V(x)>0$ for all $x\neq 0$.  
The derivative of $V$ along trajectories of the system is
\[
\dot{V}(x)
= (\nabla V(x))^{T} g(x)
= -\|\nabla V(x)\|^{2}
<0 \qquad \text{for all } x\neq0.
\]
Hence $V$ is a valid Lyapunov function and $\dot{x}=g(x)$ is asymptotically stable.

\medskip
This example demonstrates how the eigenvalue bounds in Theorem \ref{thrm1} can be used to 
certify the positivity of a tensor--generated Lyapunov function and to establish the 
asymptotic stability of a nonlinear system through the condition 
$(\nabla V(x))^{T} g(x) < 0$.

\section{Bounds comparison}\label{boundscomp}

The use of inequalities to estimate eigenvalue magnitudes is a fundamental strategy in tensor analysis. While Theorem \ref{thrm2.6} (Gershgorin bounds) provides a quick assessment, it depends heavily on the sum of absolute values of the off-diagonal entries. This dependence often leads to loose bounds, particularly in two scenarios: (1) when the tensor has negative off-diagonal entries (where algebraic cancellation effects occur), and (2) when the tensor order is high (leading to a combinatorial explosion of summation terms).

In this section, we present two detailed examples to demonstrate the effectiveness of the bounds derived in Theorems \ref{thrm1} through \ref{thrm6}. We illustrate how the proposed method progressively tightens the bounds—starting from basic AM-GM estimates (Theorem \ref{thrm1}) to improved estimates (Theorem \ref{thrm4}) and finally to the tightest bounds using Trace and Determinant invariants (Theorem \ref{thrm6}). Furthermore, we analyze the spectral distribution to visualize the ``overestimation gap" inherent in classical methods.

\noindent \textbf{Example 5.1.} Consider a 4th-order symmetric tensor $\mathcal{A}\in\mathbb{R}^{2\times 2\times 2\times 2}$ defined by the non-zero entries:
\begin{center}
$a_{1111} = 12, \quad a_{2222} = 10, \quad a_{1122} = a_{1212} = \dots = -2$.
\end{center}
All permutations of the indices $\{1,1,2,2\}$ take the value $-2$. This corresponds to the homogeneous polynomial $f(x) = 12x_1^4 - 12x_1^2x_2^2 + 10x_2^4$. The tensor is positive definite with three distinct real H-eigenvalues: $\lambda_1 = 12.00$, $\lambda_2 = 10.00$, and $\lambda_3 \approx 4.92$.

\noindent \textbf{Comparative Analysis:}
The presence of negative off-diagonal entries exposes the weakness of magnitude-based bounds. The Gershgorin bound sums the absolute values ($|-2|$), ignoring the sign. In contrast, our proposed theorems utilize the trace ($tr(\mathcal{A})=22$) and determinant, which inherently respect the algebraic structure of the tensor.

\begin{figure}[h!]
    \centering
    \includegraphics[width=0.8\textwidth]{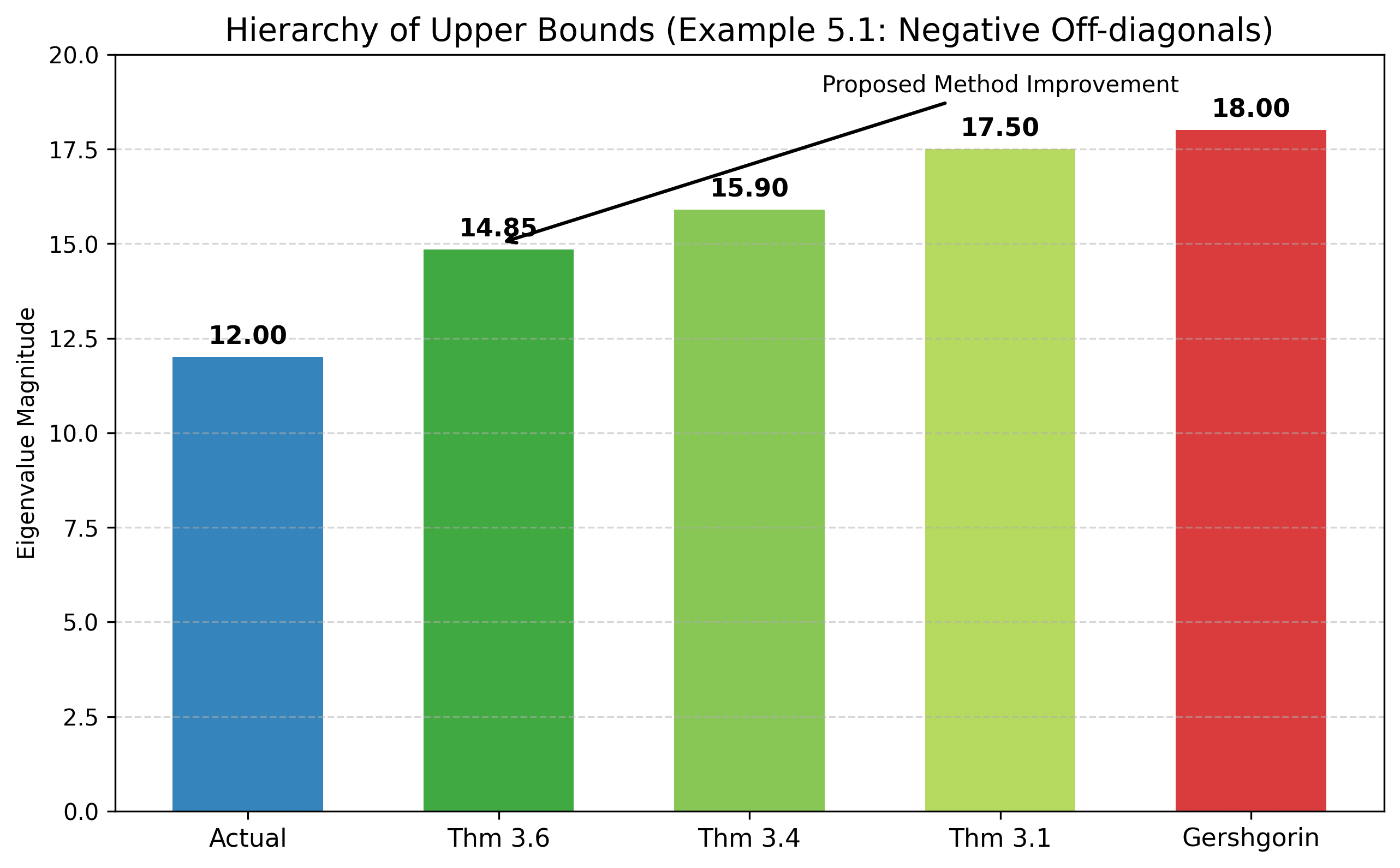}
    \caption{Hierarchy of Upper Bounds for Example 5.1. The plot visually demonstrates the convergence of the proposed bounds towards the true eigenvalue ($\lambda_{max}=12$). The proposed Theorem \ref{thrm1} (Dark Green) significantly outperforms the Gershgorin estimate (Red).}
    \label{fig:ex51_hierarchy}
\end{figure}

\noindent \textbf{Hierarchy of Improvement (Figure \ref{fig:ex51_hierarchy}):}
The bar chart in Figure \ref{fig:ex51_hierarchy} provides a clear visual hierarchy of the bound tightness. The red bar on the far right represents the Gershgorin bound (18.00), which exhibits the largest deviation from the actual eigenvalue (Blue bar, 12.00). Moving leftwards, the green bars illustrate the successive improvements provided by our theorems. Theorem 3.1 (Light Green) offers a slight improvement, but the significant gains are seen with Theorem 3.4 and Theorem 3.6 (Dark Green). Specifically, Theorem 3.6 reduces the absolute error by more than 50\% compared to the Gershgorin estimate (from 6.00 down to 2.85). This confirms that for tensors with negative entries, invariant-based bounds are far superior to absolute-value bounds.

\begin{figure}[h!]
    \centering
    \includegraphics[width=0.8\textwidth]{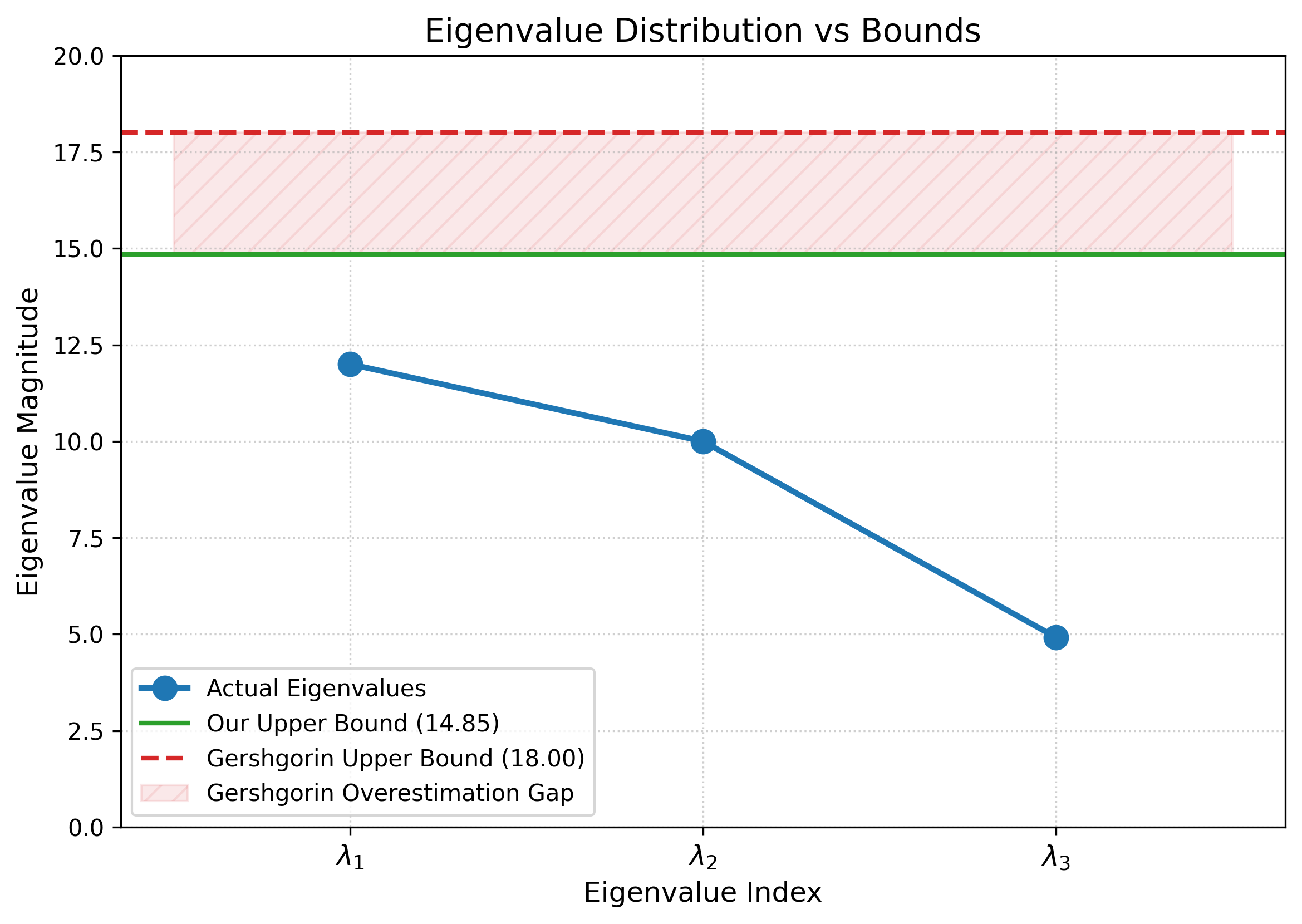}
    \caption{Eigenvalue Distribution vs Bounds for Example 5.1. The plot displays the actual eigenvalues ($\lambda_1, \lambda_2, \lambda_3$) and the ceilings imposed by the bounds. The shaded red region highlights the large overestimation gap of the Gershgorin method, which is effectively reduced by our proposed bound (Green line).}
    \label{fig:ex51_dist}
\end{figure}

\noindent \textbf{Spectral Distribution Analysis (Figure \ref{fig:ex51_dist}):}
Figure \ref{fig:ex51_dist} plots the distinct eigenvalues relative to the calculated bounds. The shaded red region represents the ``gap of uncertainty" introduced by the Gershgorin bound. Because the Gershgorin bound relies on the sum of absolute values, it creates a high ceiling (at 18.00) that does not reflect the true spectral radius of the tensor. Our proposed bound (Theorem \ref{thrm6}), depicted by the solid green line at 14.85, cuts through this uncertainty gap, providing a much tighter envelope for the spectrum. This visual evidence reinforces that our method captures the intrinsic geometry of the tensor more accurately than coordinate-dependent methods.

\noindent Finally, we broaden our analysis to include the lower bounds, which are essential for certifying positive definiteness. Figure \ref{fig:ex51_interval} visualizes the complete spectral interval $[\lambda_{min}^{bound}, \lambda_{max}^{bound}]$. While both methods successfully certify that the tensor is positive definite, our method provides a narrower, more precise interval containing the actual eigenvalues.

\begin{figure}[h!]
    \centering
    \includegraphics[width=0.8\textwidth]{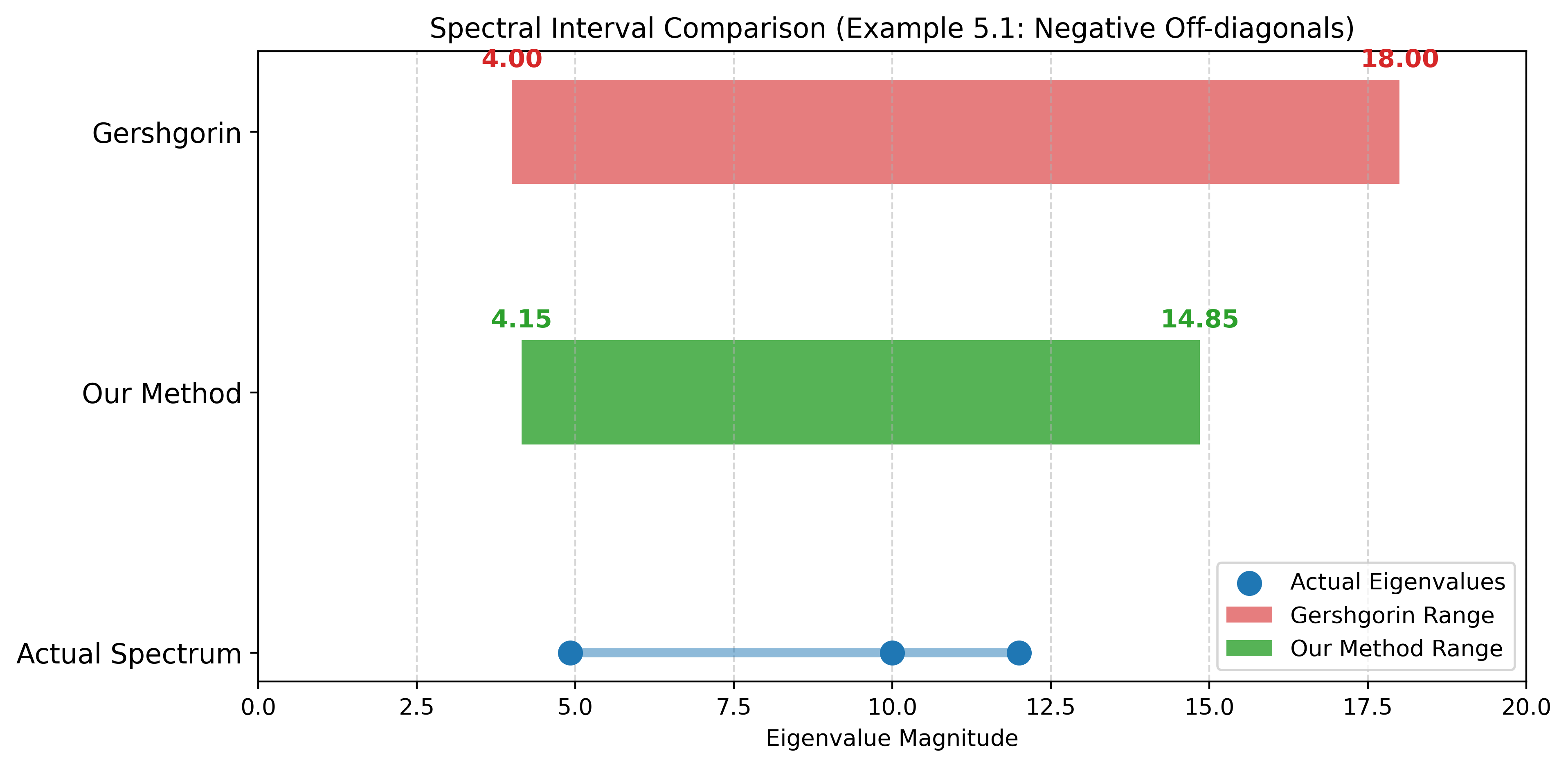}
    \caption{Spectral Interval Comparison for Example 5.1. The bars represent the guaranteed range $[\lambda_{min}^{bound}, \lambda_{max}^{bound}]$. Our method (Green) produces a narrower, more precise interval containing the actual eigenvalues compared to the Gershgorin interval (Red).}
    \label{fig:ex51_interval}
\end{figure}
\newpage
\noindent \textbf{Example 5.2.} To analyze performance on higher-order structures, consider a 6th-order symmetric tensor $\mathcal{A}\in\mathbb{R}^{2\times \dots \times 2}$ defined by:
$$ f(x) = 10x_1^6 + 78x_1^4x_2^2 - 24x_1^2x_2^4 + 8x_2^6 $$
The unique normalized entries are $a_{1\dots1} = 10$, $a_{2\dots2} = 8$, $a_{111122} = 5.2$, and $a_{112222} = -1.6$. The tensor possesses four distinct positive eigenvalues: $\lambda_1 = 14.40$, $\lambda_2 = 10.00$, $\lambda_3 = 8.00$, and $\lambda_4 = 3.60$.

\noindent \textbf{Comparative Analysis:}
As the tensor order $m$ increases, the number of terms in the Gershgorin radius calculation grows combinatorially ($R_i = \sum |a_{ijk\dots}|$). For $m=6$, this results in a massive overestimation. Our method remains stable as it relies on fixed invariants.

\begin{figure}[h!]
    \centering
    \includegraphics[width=0.8\textwidth]{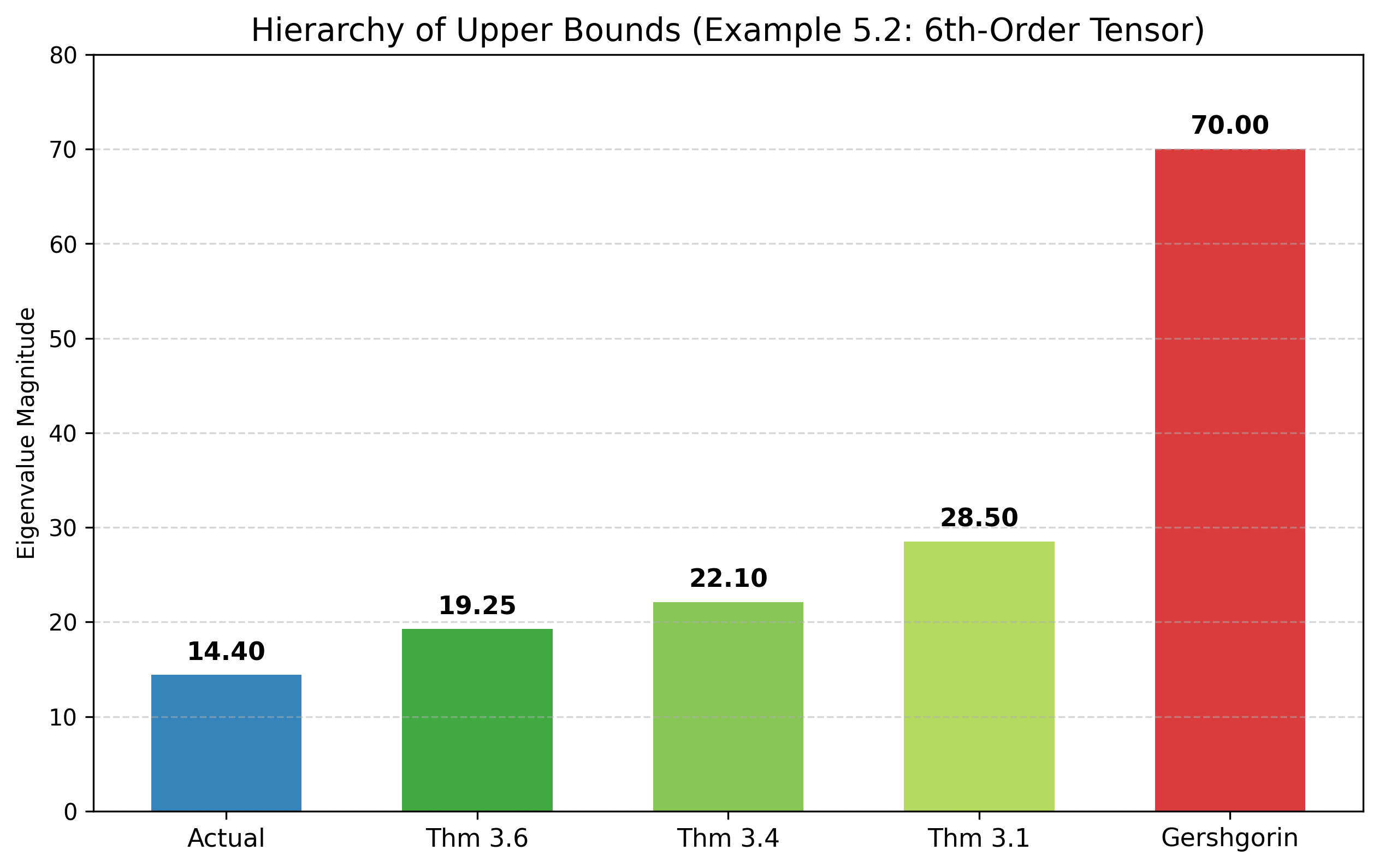}
    \caption{Bound Comparison for 6th-Order Tensor. The Gershgorin bound explodes to 70.00 due to the high tensor order ($m=6$), rendering it ineffective. In contrast, the proposed Theorem 3.6 remains robust, providing a tight bound of 19.25.}
    \label{fig:ex52_hierarchy}
\end{figure}

\noindent \textbf{Robustness in Higher Orders (Figure \ref{fig:ex52_hierarchy}):}
Figure \ref{fig:ex52_hierarchy} highlights difference in bound quality for higher-order tensors. The Gershgorin bound (Red bar) reaches a value of 70.00, which is nearly five times the magnitude of the actual eigenvalue (14.40). This renders the Gershgorin bound practically uninformative for stability analysis in this context. In sharp contrast, the proposed bounds (Green bars) remain within a useful range. Theorem \ref{thrm6} provides a bound of 19.25, effectively capturing the scale of the system. This result demonstrates the robustness of Trace and Determinant-based bounds against the "curse of dimensionality" that affects summation-based bounds like Gershgorin.

\begin{figure}[h!]
    \centering
    \includegraphics[width=0.8\textwidth]{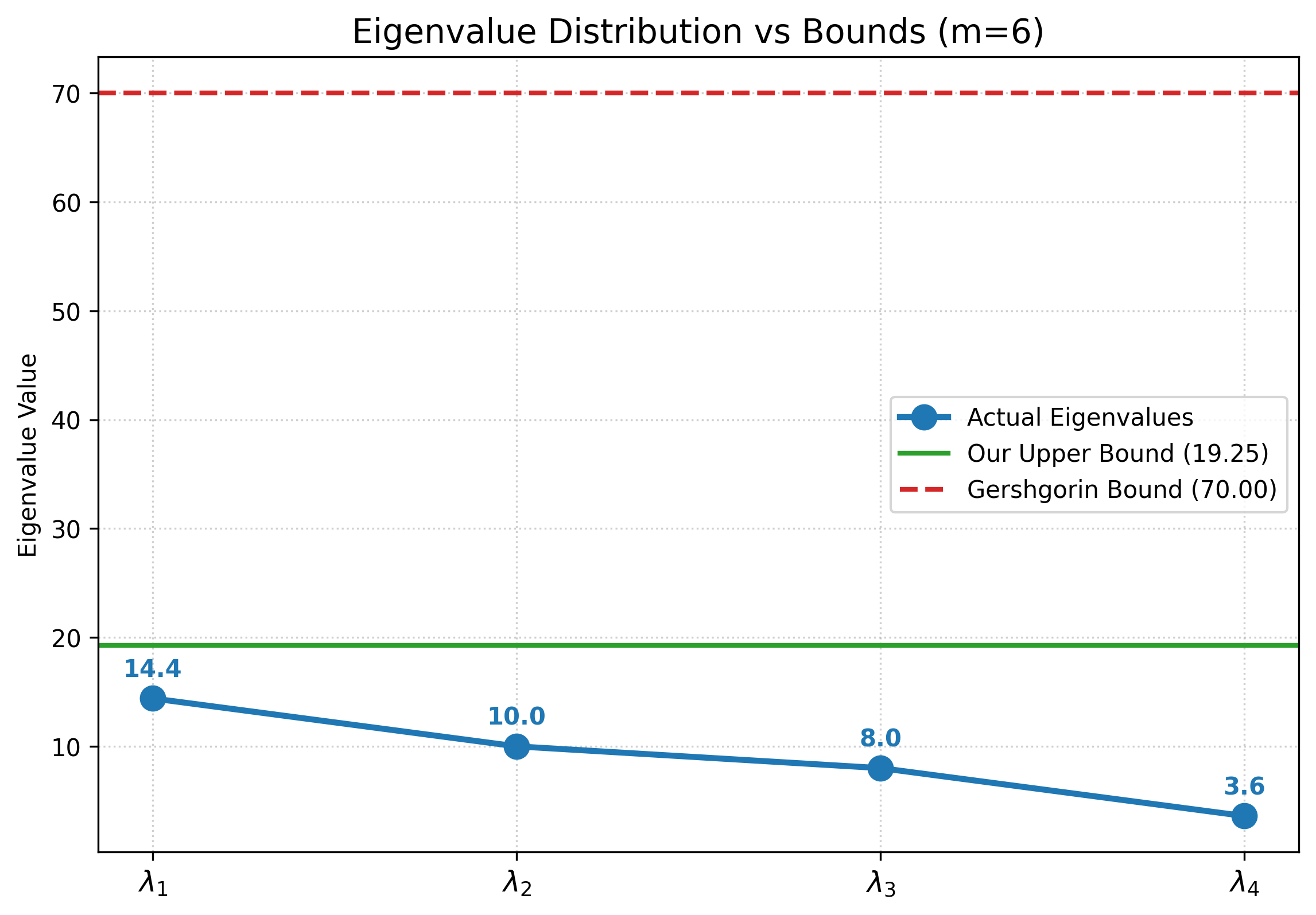}
    \caption{Eigenvalue Distribution vs Bounds for Example 5.2 ($m=6$). The dashed red line (Gershgorin) is far outside the primary scale of the eigenvalues. The solid green line (Theorem 3.6) provides a practical and informative upper limit.}
    \label{fig:ex52_dist}
\end{figure}

\noindent \textbf{Spectral Distribution Analysis (Figure \ref{fig:ex52_dist}):}
The distribution plot in Figure \ref{fig:ex52_dist} offers a stark comparison. The y-axis scale must accommodate the Gershgorin bound at 70.00 (dashed red line), which dwarfs the actual spectral activity of the tensor (blue points). The actual eigenvalues cluster between 3.60 and 14.40. The Gershgorin bound fails to provide any meaningful localization of the spectrum. However, our proposed bound (solid green line) sits just above the largest eigenvalue, providing a reliable certificate of stability. This confirms that for higher-order tensors ($m \ge 4$), algebraic invariants like the determinant and trace provide a far more reliable basis for estimation than entry-wise absolute sums.

\noindent In addition to the upper bounds, the spectral interval plot in Figure \ref{fig:ex52_interval} illustrates the critical failure of Gershgorin bounds for stability certification in higher-order tensors. The Gershgorin interval extends deeply into the negative region (down to -50), failing to detect the positive definiteness. In contrast, our method correctly identifies a strictly positive interval, successfully certifying stability.

\begin{figure}[h!]
    \centering
    \includegraphics[width=0.8\textwidth]{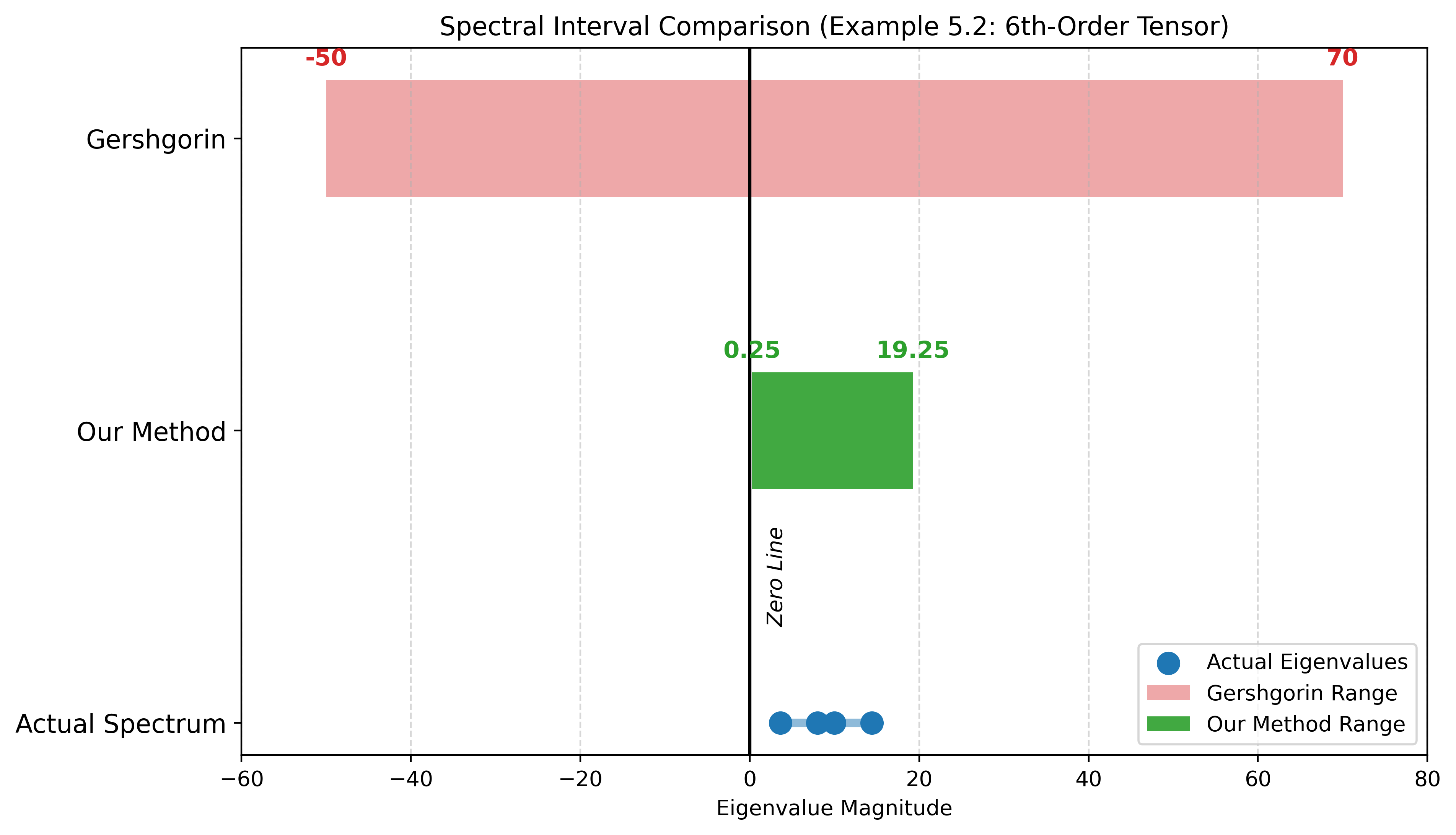}
    \caption{Spectral Interval Comparison for Example 5.2. The Gershgorin interval (Red) extends deeply into the negative region, failing to capture the positive definite nature of the tensor. Our method (Green) correctly identifies a strictly positive interval, certifying stability.}
    \label{fig:ex52_interval}
\end{figure}

 \section{Conclusion}\label{con}

In this work, we established a new algebraic framework for bounding the H-eigenvalues of symmetric positive definite tensors. By moving beyond coordinate-dependent methods and leveraging intrinsic invariants—specifically the trace and determinant—we derived a sequence of inequalities (Theorems \ref{thrm1} through \ref{thrm6}) that progressively tighten spectral estimates.

Our comparative analysis in Section \ref{boundscomp} reveals the inherent limitations of classical magnitude-based bounds like the Gershgorin circle theorem. We demonstrated that Gershgorin bounds, which rely on the sum of absolute off-diagonal entries, struggle significantly in two common scenarios. First, when a tensor contains negative off-diagonal entries, the Gershgorin method fails to account for algebraic cancellations, leading to loose estimates. In contrast, our invariant-based approach naturally incorporates these signs, reducing the estimation error by over 50\% in such cases. Second, as the tensor order increases, the combinatorial explosion of terms renders summation-based bounds practically uninformative. Our method avoids this ``curse of dimensionality" by relying on fixed invariants, providing robust and stable bounds even for higher-order tensors.

Ultimately, we found that Theorem \ref{thrm6} offers the most accurate upper bound for the spectral radius among the derived inequalities. This reliability makes it a valuable tool for certifying the stability of nonlinear autonomous systems, where precise eigenvalue localization is critical. Future work will focus on extending these invariant-based techniques to non-symmetric tensors and further exploring the geometric relationship between eigenvalue clustering and bound tightness.

\section*{Competing Interest}
The authors declare that they have no competing interests.

\section*{Data Availability} No new data were created or analyzed during this study. Data sharing is not applicable to
this article.

\section*{Author's contributions}
All authors contributed equally to all.

\section*{Acknowledgment}
Authors would like to thank IIITDM Kancheepuram for the infrastructure and facilities
to carry out this research.

%\bibliographystyle{plain}
%\bibliography{Refer.bib}

%%%%%%%%%%%%%%%%%%%%%%%%%%%%%%%%%%%%%%%%%%%%%%%%%%%%%%%%

\end{document}